\title[Sample Complexity Using Infinite Multiview Models]{Sample Complexity Using Infinite Multiview Models}
\DeclareMathAlphabet{\pazocal}{OMS}{zplm}{m}{n}
\def\rn{\mathbb{R}}
\def\nn{\mathbb{N}}
\def\tf{\tilde{f}}
\def\tp{\tilde{p}}
\def\tw{\tilde{w}}
\def\tL{\tilde{L}}
\def\sD{\pazocal{D}}
\def\sF{\pazocal{F}}
\def\sH{\pazocal{H}}
\def\d2{\sD_2}
\def\ind{\mathbbm{1}}
\def\simiid{\overset{iid}{\sim}}
\def\spn{\operatorname{span}}
\def\cip{\overset{p}{\rightarrow}}
\def\lip{\operatorname{Lip}}
\def\proj{\operatorname{Proj}}
\def\spn{\operatorname{span}}
\def\poly{\operatorname{poly}}
\def\cone{\operatorname{cone}}
\def\supp{\operatorname{supp}}
\def\nlc{\mathfrak{V}}
\begin{document}

\maketitle

\begin{abstract}%
  Recent works have demonstrated that the convergence rate of a nonparametric density estimator can be greatly improved by using a low-rank estimator when the target density is a convex combination of separable probability densities with Lipschitz continuous marginals, i.e. a multiview model. However, this assumption is very restrictive and it is not clear to what degree these findings can be extended to general pdfs. This work answers this question by introducing a new way of characterizing a pdf's complexity, the \emph{non-negative Lipschitz spectrum} (NL-spectrum), which, unlike smoothness properties, can be used to characterize virtually any pdf. Finite sample bounds are presented that are dependent on the target density's NL-spectrum. From this dimension-independent rates of convergence are derived that characterize when an NL-spectrum allows for a fast rate of convergence.
\end{abstract}

\begin{keywords}%
  Nonparametric Density Estimation, Low-Rank Model, Density Estimation, Tensor Factorization, Sample Complexity
\end{keywords}

\section{Introduction}
Nonparametric density estimation is a statistical task whose mathematical properties are very well-studied. The universal consistency of popular nonparametric density estimators, like the histogram and kernel density estimator, has been known for some time. In addition, there exist finite sample bounds and rates of convergence for nonparametric density estimators when the target density is known to come from a smooth class of densities \citep{silverman78,gyorfi85,devroye01,tsybakov08,vandermeulen13,dasgupta14,jiang17}. It is well-known that nonparametric density estimation suffers strongly from the curse of dimensionality. In theoretic works this typically manifests as a dimensionality exponent somewhere in rates or bounds (see Theorem 1 in \cite{jiang17}, for example). Recently it has been proven that combining smoothness assumptions with a low-rank/multiview assumption can drastically improve the rate of convergence of a nonparametric density estimator, obviating the curse of dimensionality. In particular, it has been proven that there exist universally consistent nonparametric density estimators that converge at rate $\tilde{O}_n\left(1/\sqrt[3]{n}\right)$ whenever the target density satisfies a \emph{multiview model} assumption \citep{vandermeulen21},
\begin{equation}\label{eqn:mv-finite}
  p\left(x_1,\ldots,x_d\right) = \sum_{i=1}^k w_i \prod_{j=1}^d p_{i,j}\left(x_j\right),
\end{equation}
where $p_{i,j}$ are Lipschitz continuous probability density functions (pdfs) and $w$ lies in the probability simplex. Remarkably this rate is independent of dimension, $d$, the number of components, $k$, or the Lipschitz constants of the component marginal densities. On the same class of densities it was also shown that the standard histogram estimator converges at rate $\omega\left(1/\sqrt[d]{n}\right)$ regardless of choice of rate on bin width \citep{vandermeulen21}, which is a clear instance of the curse of dimensionality. 

While the result above demonstrates the potential benefits of incorporating multiview structure into density estimation, the assumption \eqref{eqn:mv-finite} is quite strong. This work extends the analysis of \cite{vandermeulen21} so that it is applicable to virtually any pdf. To do this, a new characterization of density complexity, the \emph{non-negative Lipschitz spectrum} (NL-spectrum) is introduced. The NL-spectrum is an infinite sum ($k=\infty$) extension of \eqref{eqn:mv-finite}, that characterizes how fast $w_i$ decays and the Lipschitz constants grow. The NL-spectrum is then shown to be applicable to an extremely general class of pdfs: every (Lebesgue) almost everywhere (a.e.) continuous pdf has an NL-spectrum. This enables the extension of the analysis in \cite{vandermeulen21} to a much larger class of pdfs. 
A finite sample bound depending on NL-spectra is then derived for the low-rank histogram estimators introduced in \cite{vandermeulen21}. Finally this bound is used to show rates of convergence of estimators based on NL-spectra rates of growth and decay. In particular, in the infinite component version of \eqref{eqn:mv-finite}, if it is known that $\sum_{i=k+1}^\infty w_i \in O_k\left(k^{-\alpha}\right)$ and the Lipschitz constants of $p_{k,j}$, $L_k$ satisfy, $L_k \in O_k\left(k^\beta\right)$ then there exists a universally consistent estimator that converges at rate $\tilde{O}_n\left(n^{-\alpha/(3\alpha + \beta +1)}\right)$.

\subsection{Related Work}
The first work to investigate nonparametric density estimators for multiview models was \cite{song14}, which proposed a method for factorizing a kernel density estimate to recover a multiview model. While \cite{song14} contained theoretical guarantees for the rate of recovery of the multiview components, it did not demonstrate that the multiview assumption could be leveraged to improve estimator convergence. Instead it was proposed an approach to nonparametric mixture modeling. Other approaches to nonparametric mixture modeling with strong theoretical guarantees include assuming the data is grouped with respect to mixture components \citep{vandermeulen15,vandermeulen19,ritchie21,vandermeulen22} or that the are mixture components satisfy concentration assumptions \citep{dan18,aragam20,aragam21,aragam22}. Like \cite{song14}, these approaches did not produce rates of convergence that beat typical nonparametric rates.

Other investigations into low-rank density estimation have focused on identifiability and recoverability \citep{allman09,kargas19}, while other works have proposed low-rank methods that are empirically shown to improve nonparametric estimation without rate guarantees \citep{song13,novikov21,amiridi22}.
Low-rank approaches to matrix estimation have also been studied extensively. Non-negative matrix or tensor factorization is a task similar to multiview density estimation since it can be used to recover a low-rank probability matrix/tensor \citep{lee99,donoho04,kim07,arora12}. Low-rank matrix methods have been studied extensively in the field of \emph{compressed sensing} which has produced improved matrix estimators with strong theoretical guarantees using the restricted isometry property \citep{recht10} or restricted strong convexity \citep{negahban11,negahban12}. Although a bit different than the methods presented so far, enforcing low non-negative rank for coupling measures in Wasserstein distance estimation has also been investigated. This has been shown to yield improved statistical estimation with computational benefits \citep{scetbon21,scetbon22}. While distance estimation is fairly different from density estimation, this method is noteworthy since it optimizes over a class of low-rank probability measures and has strong theoretical analysis demonstrating improved estimator convergence.  

The works \cite{vandermeulen20,vandermeulen21} are the first to show, via strong theoretic guarantees, that a multiview assumption can be used to improve the rate of convergence of nonparametric density estimators. Those works also show that a non-negative Tucker factorization can also be used to this effect and found that Tucker factorization produced better estimators in practice. Similarly to the proofs in those works, the results in this paper are \emph{not} adaptations of techniques developed for compressed sensing or non-negative matrix factorization.

\section{Results} \label{sec:results}
This section presents and discusses the main results of this paper. Proofs of all results can be found in Section \ref{sec:proofs}.

Before introducing the results, some notation and terminology needs to be introduced. Other notation will be introduced intermittently through this work and a table summarizing notation in this work can be found in Appendix \ref{appx:notation}.
For a pair of sets, $A$ and $B$, $A\times B$ denotes the Cartesian product.
For a pair of real-valued functions, $f:A \to \rn$ and $g:B \to \rn$, their product is defined as $f\times g:A\times B\to \rn$: $\left(a, b\right) \mapsto f(a)g(b)$. Note that if $A=B$ in this case then $f\times g$ is a function on $A\times A$, \emph{not} a function on $A$; $f\cdot g: x\mapsto f(x)g(x)$.  $\nn$ denotes all integers greater than 0. For a set $A$, $\ind_A$ denotes the indicator function on $A$. For $n \in \nn$, $\left[n\right] = \left\{1,2,\ldots,n\right\}$. For sets and functions, the product operator $\prod$ and power operator $\cdot^{\times n}$ will always mean the products, $\times$, defined above.
 The term \emph{almost everywhere} (a.e.) will always refer to the Lebesuge measure. A \emph{pdf} is an a.e non-negative function with Lebesuge integral equal to one. Equalities (or inequalities) of functions will always mean equality (or inequality) almost everywhere.
\subsection{The Non-negative Lipschitz Spectrum}
In density estimation the smoothness of a pdf is often used as a measure of its complexity or how difficult the density is to estimate. The following is a characterization of a density's complexity in manner similar to the spectrum of a linear operator and is the focus of the rest of this work.
\begin{definition}\label{def:nl-spectrum}
  A pdf $p$ has a \emph{non-negative Lipschitz spectrum}, $\left(w ,L\right)$, with $w\triangleq \left(w_i \right)_{i=1}^\infty$ and $L\triangleq \left(L_i\right)_{i=1}^\infty$, if 
  \begin{equation}\label{eqn:mv-infinite}
    p = \sum_{i=1}^\infty w_i \prod_{j=1}^d p_{i,j},
  \end{equation}
  where $\left(w_i\right)_{i=1}^\infty$ is a non-negative sequence with $\sum_{i=1}^\infty w_i = 1$ and $\left(p_{i,j}\right)_{(i,j) \in \nn \times [d]}$ are Lipschitz continuous pdfs with the Lipschitz constant of $p_{i,j}$ equal to $L_i$ for all $i,j$.\footnote{As a technical convenience it will always be assumed that $w_1 >0$.} A non-negative Lipschitz spectrum will be called \emph{smooth} if the marginals $p_{i,j}$ in \eqref{eqn:mv-infinite} are all smooth.
\end{definition}
Intuitively, an NL-spectrum where $w$ decays slowly and the $L_i$'s are large indicates a more complex pdf. Later results will describe this precisely. The ``smooth'' descriptor of an NL-spectrum will not play any role for any of the results in the rest of this work. It is simply included because it is an additional regularity property that was simple to include in the proofs and may perhaps be useful in future works.

It's worth noting that the NL-spectrum of a pdf is not unique. This lack of uniqueness is not only due to trivial modifications of the spectrum, e.g. reordering or repeated summands, but may also occur since, unlike the singular value decomposition of a matrix, minimal non-negative factorizations and factorizations of tensors are not necessarily unique up to scaling and reordering. There are many works investigating the uniqueness or lack of uniqueness of (non-negative) matrix/tensor/measure factorizations \citep{kruskal77, sidiropoulos00, donoho04, symtensorrank, allman09, anandkumar14, vandermeulen15, tahmasebi18, vandermeulen19, vandermeulen22}. This lack of uniqueness might be considered a potential disadvantage of the NL-spectrum compared smoothness-based characterizations of complexity such as H\"older, Sobolev, and Nikol'ski smoothness that are characterized by one or two scalar values rather than a pair of infinite series \citep{tsybakov08}.

The following theorem shows that NL-spectra describe a very rich class of pdfs.
\begin{theorem}\label{thm:decomp}
  If a pdf is a.e. continuous then it has a smooth non-negative Lipschitz spectrum.
\end{theorem}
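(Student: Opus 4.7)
The plan is to build the NL-spectrum by iteratively peeling off non-negative ``blocks'' from $p$, where each block is a finite non-negative combination of smooth product pdfs whose atoms share a common Lipschitz constant, and such that the $L^1$ remainder shrinks to zero. The desired sum then arises by concatenating the atoms of all blocks into one sequence; the weights automatically sum to $\int p = 1$, and the requirement in Definition \ref{def:nl-spectrum} that every factor $p_{i,j}$ within a product share a common Lipschitz constant $L_i$ comes for free because within each block every atom will be a translate of one scaled template.

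I would first set up a template atom. Let $\rho$ be a fixed $C^\infty$ nonnegative mollifier on $\mathbb{R}$ supported in $[-1,1]$ with $\int \rho = 1$, and define $\varphi_\epsilon = (\rho_{\epsilon/2} * \ind_{[\epsilon,1-\epsilon]})/(1-2\epsilon)$, a $C^\infty$ pdf supported in $[0,1]$ with finite Lipschitz constant $L(\epsilon)$ and $\|\varphi_\epsilon\|_\infty = (1-2\epsilon)^{-1}\to 1$ as $\epsilon \to 0$. Its shifted/scaled versions $\varphi_{a,h,\epsilon}(x) := h^{-1}\varphi_\epsilon((x-a)/h)$ are pdfs supported on $[a,a+h]$, all sharing Lipschitz constant $L(\epsilon)/h^{2}$ and sup-norm $h^{-1}\|\varphi_\epsilon\|_\infty$.

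The key lemma to prove is: given any a.e.\ continuous integrable $f\geq 0$ and any $\delta > 0$, there exist $h,\epsilon > 0$ and nonnegative coefficients $(c_{\mathbf k})_{\mathbf k \in \mathbb{Z}^d}$ such that $q := \sum_{\mathbf k} c_{\mathbf k} \prod_{j=1}^d \varphi_{k_j h,h,\epsilon}$ satisfies $q \leq f$ a.e.\ and $\int(f-q) < \delta$. The construction uses the dyadic cube grid $C_{\mathbf k} = \prod_j [k_j h,(k_j+1)h]$ and sets $c_{\mathbf k} = (\operatorname{ess\,inf}_{C_{\mathbf k}} f)\,h^d\|\varphi_\epsilon\|_\infty^{-d}$; since the supports of the atoms are (a.e.) disjoint and $\prod_j \varphi_{k_j h,h,\epsilon} \leq h^{-d}\|\varphi_\epsilon\|_\infty^d$ on $C_{\mathbf k}$, this forces $q \leq f$ a.e. By a.e.\ continuity of $f$, the essential infima over the vanishing cube $C_{\mathbf k}(x)$ tend to $f(x)$ at each continuity point, so the lower step function $g_h = \sum_{\mathbf k}(\operatorname{ess\,inf}_{C_{\mathbf k}} f)\ind_{C_{\mathbf k}}$ converges a.e.\ to $f$; dominated convergence gives $\int g_h \to \int f$, hence $\int q = \|\varphi_\epsilon\|_\infty^{-d}\int g_h \to \int f$ as $h,\epsilon \to 0$.

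Lastly I would iterate: set $f_1 = p$ and, given $f_n$, apply the lemma to obtain $q_n$ with $\int(f_n - q_n) < 2^{-n}$, then set $f_{n+1} = f_n - q_n \geq 0$. Each $f_n$ remains a.e.\ continuous (as a difference of $p$ and smooth $q_1,\ldots,q_{n-1}$), so the lemma applies at every step and $\int f_n \to 0$. The partial sums $\sum_{k \leq n} q_k$ are monotone nondecreasing and bounded by $p$, so they converge a.e.\ to some $s \leq p$; matching integrals forces $s = p$ a.e. Enumerating the atoms of all blocks as a single sequence yields $p = \sum_i w_i \prod_j p_{i,j}$, with all $d$ factors of each product sharing the Lipschitz constant $L_n = L(\epsilon_n)/h_n^{2}$ of its block and with smoothness inherited from $\rho \in C^\infty$. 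The most delicate step will be verifying $\operatorname{ess\,inf}_{C_{\mathbf k}(x)} f \to f(x)$ a.e.\ for a merely a.e.\ continuous $f$, which reduces to invoking genuine continuity on the (full-measure) set of continuity points of $f$.
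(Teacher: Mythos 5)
Your proof is correct, and it takes a genuinely different route from the paper's. The paper proceeds in two preliminary reduction steps (first splitting $p$ into compactly supported pieces indexed by unit cubes, then level-slicing each piece via $q_{i,j}=\max(\min(p_i-j,1),0)$ to obtain bounded summands) precisely so it can invoke the Riemann--Lebesgue theorem and use lower \emph{Darboux sums}; those Darboux step functions are then approximated from below with smooth bump functions via a separate inductive-on-dimension lemma. You bypass both reductions by working directly with an integrable, possibly unbounded, non-compactly-supported $f$, replacing the Darboux sum with the grid step function $g_h = \sum_{\mathbf k}(\operatorname{ess\,inf}_{C_{\mathbf k}} f)\ind_{C_{\mathbf k}}$; the convergence $\operatorname{ess\,inf}_{C_{\mathbf k}(x)}f\to f(x)$ at continuity points, plus DCT dominated by $f$, plays the role that Darboux integrability plays in the paper. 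Your mollified-indicator template also collapses the paper's dimension induction, since products of translates/scales of one fixed $\varphi_\epsilon$ automatically give $\sF^d$ atoms with a block-wide common Lipschitz constant. Both proofs then close the argument by the same iterative peeling and $L^1$ absolute-summability relabeling. What the paper's modular route buys is that each intermediate object is a \emph{finite} conical combination (Lemmas 2--4 are cleanly separated, and each step is elementary Riemann integration); what your route buys is brevity and the elimination of the two reduction steps, at the price of a slightly less elementary tool (ess inf on a shrinking grid). One small point worth making explicit in a write-up: each $q_n$ is smooth (locally a finite sum because the atoms have pairwise disjoint supports strictly interior to their cubes), which is what guarantees $f_{n+1}=f_n-q_n$ remains a.e.\ continuous so the lemma can be reapplied; and after relabeling you should place a strictly positive weight first to satisfy the footnoted convention $w_1>0$.
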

This set of pdfs arguably contains \emph{all} pdfs that are of practical interest; it is difficult to imagine a real-life situation where one would be interested in estimating a pdf that is essentially discontinuous on a set of positive measure. In comparison the H\"older, Sobolev, and Nikol'ski smoothness classes are not applicable to pdfs that contain a single discontinuity. 

Decompositions or approximations reminiscent of the NL-spectrum exist elsewhere in analysis and probability theory. For example, a multivariate Riemann sum has a form similar to \eqref{eqn:mv-finite},
 \begin{equation*}
   \sum_{i=1}^k w_i \ind_{\left(a_{i,1},b_{i,1}\right) \times \cdots \times \left(a_{i,d},b_{i,d}\right)} = \sum_{i=1}^k w_i \ind_{\left(a_{i,1},b_{i,1}\right)} \times \cdots \times \ind_{\left(a_{i,d},b_{i,d}\right)}= \sum_{i=1}^k w_i \prod_{j=1}^d \ind_{\left(a_{i,j},b_{i,j}\right)}.
 \end{equation*}
A similar decomposition has been mentioned in works on stochastic processes. The following quote is from \cite{kendall02}:
 \begin{displayquote}
   ...one can show existence for state-space a smooth manifold when the kernel has a continuous density $p(x, y)$, and indeed then one can show small sets of order 1 abound, in the sense that they can be used to produce a representation  
     $p(x,y) = \sum_{i=1}^\infty f_i(x)g_i(y)$, 
   where the $f_i(x)$ are non-negative continuous functions supported on small sets, and the $g_i(y)$ are probability density functions. 
 \end{displayquote}
 Theorem \ref{thm:decomp} stands apart from previous results because the NL-spectrum decomposition is exact, not an approximation, it is a mixture of pdfs, the component marginals have strong regularity properties, and it describes a very rich and practically useful class of pdfs.
\subsection{Nonparametric Estimator Results}
The following theorems show the existence of nonparametric density estimators whose performance depends on the NL-spectrum of the target density. Let $\sD_d$ be the set of all pdfs supported on the $d$-dimensional unit cube, $\left[0,1\right]^{\times d}$. The following theorem is derived using a low-rank histogram estimator introduced in \cite{vandermeulen21}.
\begin{theorem}
  \label{thm:nl-finite}
    Let $d,b,k,n \in \nn$ and $0<\delta\le 1$. There exists an estimator $V_n \in \sD_d$ such that, for any density $p\in\sD_d$, with NL-spectrum $\left(w,L\right)$, the following holds
        \begin{equation*}
          P\left(\|p-V_n\|_1 > \frac{\sqrt{3}d}{2b}\sum_{i=1}^k w_i L_i  + 6\sum_{i=k+1}^\infty w_i+ 7\sqrt{\frac{2bdk\log(4bdkn)}{n}}+7\sqrt{\frac{\log(\frac{3}{\delta})}{2n}}\right)<\delta,
        \end{equation*}
    where $V_n$ is a function of $X_1,\ldots,X_n\simiid p$.
\end{theorem}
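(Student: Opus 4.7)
The plan is to decompose the estimation error into three parts: a truncation error from discarding the NL-spectrum tail beyond index $k$, a discretization error from replacing each surviving Lipschitz marginal by a $b$-bin piecewise-constant approximation, and a statistical error from running the low-rank histogram estimator of \cite{vandermeulen21}. This way the NL-spectrum only enters through the first two (deterministic) terms, while the statistical term is controlled on the low-rank histogram class.

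First I would truncate. Write $p = q_k + r_k$ with $q_k = \sum_{i=1}^k w_i\prod_{j=1}^d p_{i,j}$ and $r_k = \sum_{i=k+1}^\infty w_i\prod_{j=1}^d p_{i,j}$; since each $\prod_j p_{i,j}$ is a pdf, $\|r_k\|_1 = \sum_{i=k+1}^\infty w_i$. Next I would discretize. For $i \le k$ and each coordinate $j$, let $\tilde p_{i,j}$ be the piecewise-constant function on $[0,1]$ equal to the mean of $p_{i,j}$ on each of $b$ equal-width bins. A one-dimensional Lipschitz calculation (the same bound used in \cite{vandermeulen21}) gives $\|p_{i,j} - \tilde p_{i,j}\|_1 \le \sqrt{3}\,L_i/(2b)$, and $\tilde p_{i,j}$ remains a pdf. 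The telescoping product identity $\|\prod_j f_j - \prod_j g_j\|_1 \le \sum_j \|f_j-g_j\|_1$, valid when all $f_j,g_j$ integrate to $1$, then yields $\|\prod_j p_{i,j} - \prod_j \tilde p_{i,j}\|_1 \le d\sqrt{3}\,L_i/(2b)$. Defining $\tilde q_k \triangleq \sum_{i=1}^k w_i \prod_j \tilde p_{i,j}$, which is a non-negative rank-$k$ histogram on the $b^d$ grid of $[0,1]^{\times d}$, the triangle inequality produces
\begin{equation*}
\|p-\tilde q_k\|_1 \;\le\; \frac{\sqrt{3}\,d}{2b}\sum_{i=1}^k w_i L_i \;+\; \sum_{i=k+1}^\infty w_i.
\end{equation*}
Second I would apply the low-rank histogram estimator of \cite{vandermeulen21} at the parameters $(b,k)$ to obtain $V_n$. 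That estimator effectively minimises an $L^1$-type discrepancy between the empirical histogram of $X_1,\ldots,X_n$ and the class of non-negative rank-$k$ histograms on $b^d$ bins. An ERM-style approximation/estimation decomposition gives, with probability at least $1-\delta$,
\begin{equation*}
\|p - V_n\|_1 \;\le\; C_1 \inf_{u}\|p-u\|_1 \;+\; C_2\sqrt{\tfrac{bdk\log(bdkn)}{n}} \;+\; C_3\sqrt{\tfrac{\log(1/\delta)}{n}},
\end{equation*}
where $u$ ranges over non-negative rank-$k$ histograms on the $b^d$ grid. Using $\tilde q_k$ as a feasible element upper-bounding the infimum, and plugging in the previous display, yields the stated bound once the constants are tracked.

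\textbf{Main obstacle.} The bulk of the technical work is the statistical step: \cite{vandermeulen21}'s finite-sample analysis is stated for exact finite-component multiview densities, whereas here $p$ is only approximately of that form, so the approximation/estimation split must be carried out uniformly over the rank-$k$ histogram class. The covering number of this class, of order $\exp\!\bigl(O(bdk\log(bdkn))\bigr)$, drives the $\sqrt{2bdk\log(4bdkn)/n}$ term, while a DKW/Hoeffding-style union bound produces the $\sqrt{\log(3/\delta)/(2n)}$ term. The coefficient $6$ on the truncation contribution should arise from combining a multiplicative factor of (at most) $3$ picked up when relating $\|p-V_n\|_1$ to $\inf_u\|p-u\|_1$ with a further factor $2$ from routing through the empirical histogram; ensuring these constants line up as stated, rather than proving a qualitative version of the bound, will be the main bookkeeping challenge.
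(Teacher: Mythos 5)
Your overall plan is the same as the paper's: bound the bias $\min_{q\in\sH_{d,b}^k}\|p-q\|_1$ via the NL-spectrum, then feed it into the distribution-free oracle bound (Proposition~2.1 of \cite{vandermeulen21}). However, there are two concrete problems with the bias step, and one misconception about the statistical step.

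\textbf{Feasibility/normalization.} Your proposed approximant $\tilde q_k=\sum_{i=1}^k w_i\prod_j\tilde p_{i,j}$ has total mass $\sum_{i=1}^k w_i<1$, so it is \emph{not} a pdf and hence not an element of $\sH_{d,b}^k$. It cannot be used to upper-bound $\inf_{u\in\sH_{d,b}^k}\|p-u\|_1$. You must renormalize the weights to $\tilde w_i = w_i/\sum_{j\le k}w_j$; this renormalization costs an extra $\sum_{i>k}w_i$ in $L^1$, which is exactly why the paper's bias lemma carries a coefficient of $2$ on the tail sum, not $1$. Without this step your tail coefficient cannot reach $6$ after multiplication by the oracle constant.

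\textbf{Per-coordinate constant.} You assert $\|p_{i,j}-\tilde p_{i,j}\|_1\le\sqrt3\,L_i/(2b)$; the correct bound (from the $L^2$ projection calculation plus Cauchy--Schwarz) is $L_i/(\sqrt{12}\,b)=L_i/(2\sqrt3\,b)$, a factor of $3$ smaller. Note $\sqrt3/2 = 3\cdot(1/\sqrt{12})$. You appear to have folded the multiplicative factor $3$ from the oracle inequality into the discretization bound, but you then plan to apply the oracle constant again, so your constants are inconsistent: the coefficient on $\sum w_iL_i$ would come out three times too large. The correct bookkeeping is: bias $\le \frac{d}{\sqrt{12}b}\sum_{i\le k}w_iL_i + 2\sum_{i>k}w_i$, then multiply by $3$ from the oracle inequality to get $\frac{\sqrt3\,d}{2b}\sum_{i\le k}w_iL_i + 6\sum_{i>k}w_i$.

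\textbf{Perceived obstacle.} You flag as ``the bulk of the technical work'' the extension of the statistical analysis to densities that are only approximately multiview. This is not actually an obstacle: the finite-sample bound in \cite{vandermeulen21} (the paper's Proposition \ref{prop:finiteant}) is a uniform oracle inequality over \emph{all} $p\in\sD_d$, already stated with the $3\min_q\|p-q\|_1$ approximation term and explicit constants matching the statement. There is nothing to re-derive there; the entire proof reduces to controlling $\min_{q\in\sH_{d,b}^k}\|p-q\|_1$ correctly, which is where your two errors above sit.
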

While the estimators in this work are  technically implementable, they are computationally intractable. It will be helpful to introduce a slightly different way of characterizing NL-spectra. 
\begin{definition}
  An \emph{NL-class}, denoted by $\nlc\left(W,\tL\right)$, with $W \triangleq \left(W_i \right)_{i=1}^\infty$ a non-negative, non-increasing sequence that converges to 0, and $\tL\triangleq \left(\tL_i\right)_{i=1}^\infty$ a non-negative, non-decreasing sequence, is the set of all pdfs with an NL-spectrum $\left(w,L\right)$ satisfying $L_k \le \tL_k$ and $\sum_{i=k+1}^\infty w_i \le W_k$ for all $k$.
\end{definition}
There exist estimators with with following behavior on NL-classes.
\begin{theorem} 
  \label{thm:nlc-finite}
    Let $d,b,k,n \in \nn$ and $0<\delta\le 1$. There exists an estimator $V_n \in \sD_d$ such that, for any density $p\in\sD_d\bigcap \nlc\left(W,L\right)$, the following holds,
        \begin{equation*}
          P\left(\|p-V_n\|_1 > \frac{\sqrt{3}d}{2b} L_k  + 6W_k+ 7\sqrt{\frac{2bdk\log(4bdkn)}{n}}+7\sqrt{\frac{\log(\frac{3}{\delta})}{2n}}\right)<\delta,
        \end{equation*}
    where $V_n$ is a function of $X_1,\ldots,X_n\simiid p$.
\end{theorem}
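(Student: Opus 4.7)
The plan is to derive Theorem~\ref{thm:nlc-finite} as an essentially immediate corollary of Theorem~\ref{thm:nl-finite}. Crucially, the estimator $V_n$ constructed in Theorem~\ref{thm:nl-finite} is described as a function of only $d, b, k, n, \delta$ and the sample; its construction does not use any knowledge of the target density's NL-spectrum. Therefore the same estimator can be reused unchanged for Theorem~\ref{thm:nlc-finite}, and only the deterministic right-hand side of the tail bound needs to be reworked.

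Fix $p \in \sD_d \cap \nlc(W,L)$. By the definition of the NL-class, there exists an NL-spectrum $(w',L')$ of $p$ satisfying $L'_i \le L_i$ for all $i$ and $\sum_{i=k+1}^\infty w'_i \le W_k$ for all $k$. Applying Theorem~\ref{thm:nl-finite} to $p$ with this spectrum yields, with probability at least $1-\delta$, that
\begin{equation*}
  \|p - V_n\|_1 \le \frac{\sqrt{3}d}{2b}\sum_{i=1}^k w'_i L'_i + 6 \sum_{i=k+1}^\infty w'_i + 7\sqrt{\frac{2bdk\log(4bdkn)}{n}} + 7\sqrt{\frac{\log(3/\delta)}{2n}}.
\end{equation*}
It remains to bound the two spectrum-dependent terms by their NL-class counterparts.

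For the first term, I use monotonicity of $L$ together with $L'_i \le L_i$: since $L$ is non-decreasing, $L'_i \le L_i \le L_k$ for each $i \le k$, so
\begin{equation*}
  \sum_{i=1}^k w'_i L'_i \le L_k \sum_{i=1}^k w'_i \le L_k,
\end{equation*}
where the final inequality uses $\sum_{i=1}^\infty w'_i = 1$. For the second term, the NL-class assumption directly gives $\sum_{i=k+1}^\infty w'_i \le W_k$. Substituting these two bounds into the display above yields exactly the tail bound stated in Theorem~\ref{thm:nlc-finite}.

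There is no real obstacle here; the theorem is a structural rewriting of Theorem~\ref{thm:nl-finite} and the main subtlety to verify is only that the estimator of Theorem~\ref{thm:nl-finite} is constructed independently of the NL-spectrum of $p$, so that its guarantee holds uniformly over a whole NL-class and can be converted into a class-level bound via the monotonicity properties of $W$ and $L$.
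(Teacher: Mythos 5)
Your proof is correct and, at bottom, does exactly what the paper does: the paper's route passes through Lemma~\ref{lem:nlc-approx}, whose entire content is the same monotonicity computation you perform (namely $L'_i\le L_i\le L_k$ for $i\le k$ and $\sum_{i=1}^k w'_i\le 1$, plus $\sum_{i>k}w'_i\le W_k$). The only cosmetic difference is that the paper re-derives the bias bound from Proposition~\ref{prop:finiteant} via a fresh lemma, whereas you reuse Theorem~\ref{thm:nl-finite} as a black box and bound its spectrum-dependent terms directly; your observation that the estimator of Theorem~\ref{thm:nl-finite} is constructed independently of the target's NL-spectrum is the correct and necessary justification for this shortcut, and it holds because that estimator is precisely the one from Proposition~\ref{prop:finiteant}, which is uniform over all of $\sD_d$.
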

One can apply this to get estimators that are adapted to NL-classes with polynomial rates.
\begin{proposition}\label{prop:mv-poly}
  Let $\alpha,\beta >0$ and $d \in \nn$. There exists a universally consistent sequence of estimators, $\left(V_n\right)_{n=1}^\infty$, on $\sD_d$, such that, for a fixed sampling density, $p\in\sD_d\cap \nlc(W,L)$, with $W_k \in O_k\left(k^{-\alpha}\right)$ and $L_k \in O_k\left(k^{\beta}\right)$, the following holds, $\left\|V_n - p\right\|_1 \in \tilde{O}_n\left(n^{-\alpha/\left( 3 \alpha + \beta +1 \right)} \right)$.
\end{proposition}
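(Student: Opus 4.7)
The plan is to apply Theorem \ref{thm:nlc-finite} with $b$ and $k$ chosen as polynomial functions of $n$ that balance the three dominant error terms. Writing $W_k \le C_1 k^{-\alpha}$ and $L_k \le C_2 k^\beta$ for constants $C_1, C_2 > 0$ coming from the hypothesis, the high-probability bound in Theorem \ref{thm:nlc-finite} becomes, up to absolute constants and the fixed dimension $d$,
\[
\|p - V_n\|_1 \;\le\; C\Big(\tfrac{k^\beta}{b} + k^{-\alpha} + \sqrt{\tfrac{bk\log(bkn)}{n}} + \sqrt{\tfrac{\log(1/\delta)}{n}}\Big).
\]
The first three terms are the ones to balance; the last is lower order once $\delta=\delta_n$ is picked to decay polynomially in $n$.

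Equating the first two terms gives $b = k^{\alpha+\beta}$, and then equating the second with the stochastic term $\sqrt{bk/n}$ gives $k^{-2\alpha}=k^{\alpha+\beta+1}/n$, i.e.\ $k=n^{1/(3\alpha+\beta+1)}$ and $b=n^{(\alpha+\beta)/(3\alpha+\beta+1)}$. At this balanced setting each of the three main terms equals $\Theta(n^{-\alpha/(3\alpha+\beta+1)})$ up to a factor $\sqrt{\log(bkn)}=O(\sqrt{\log n})$, which is absorbed into $\tilde O_n$. I would therefore define $V_n$ as the estimator from Theorem \ref{thm:nlc-finite} with $k_n=\lceil n^{1/(3\alpha+\beta+1)}\rceil$, $b_n=\lceil n^{(\alpha+\beta)/(3\alpha+\beta+1)}\rceil$, and $\delta_n=n^{-2}$. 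Since $\alpha/(3\alpha+\beta+1)<1/2$, the $\delta_n$ contribution $O(\sqrt{\log n/n})$ is strictly lower order, and summability of $\sum_n\delta_n$ combined with Borel--Cantelli upgrades the in-probability bound to almost sure convergence at rate $\tilde O_n(n^{-\alpha/(3\alpha+\beta+1)})$ on $\sD_d\cap\nlc(W,L)$.

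For universal consistency on the whole of $\sD_d$, I would apply the more general Theorem \ref{thm:nl-finite} to any a.e.\ continuous $p$, whose NL-spectrum $(w,L)$ is furnished by Theorem \ref{thm:decomp}. With the same $b_n, k_n$ the tail $\sum_{i>k_n}w_i\to 0$ because the series $\sum_i w_i$ converges, the stochastic term vanishes because $b_nk_n/n=n^{-2\alpha/(3\alpha+\beta+1)}\to 0$, and the $\delta_n$-term vanishes trivially.

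The one step that is not automatic is the bias term $\tfrac{d}{2b_n}\sum_{i\le k_n}w_iL_i$, since for a general pdf the Lipschitz constants $L_i$ need not grow polynomially and $b_n$ is tailored to the specific class parameters $(\alpha,\beta)$; this is the main obstacle. The cleanest way around it is to exploit the non-uniqueness in Theorem \ref{thm:decomp} and select an NL-spectrum for which $\sum_i w_iL_i<\infty$, so that the bias is $O(1/b_n)$ uniformly in $n$ and vanishes automatically. A robustness-oriented alternative is to enlarge $b_n$ to $b_n\vee f(n)$ for a slowly growing $f$ chosen to absorb an arbitrary $L_{k_n}$, which does not spoil the rate on the class. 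Either route closes the argument.
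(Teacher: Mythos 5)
Your choice of $k_n=\lceil n^{1/(3\alpha+\beta+1)}\rceil$ and $b_n=\lceil n^{(\alpha+\beta)/(3\alpha+\beta+1)}\rceil$ and the subsequent balancing of the three main terms is exactly right and matches the paper's rate computation. The gap is entirely in your treatment of universal consistency, where you take a genuinely harder and ultimately unfounded route.

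You try to establish consistency on all of $\sD_d$ by routing through Theorem \ref{thm:decomp} and Theorem \ref{thm:nl-finite}, and then correctly identify that the bias term $\frac{\sqrt{3}d}{2b_n}\sum_{i\le k_n}w_iL_i$ need not vanish since $L_i$ can grow arbitrarily fast. Neither of your proposed fixes closes this. Fix (a) assumes one can always select an NL-spectrum with $\sum_i w_iL_i<\infty$, but nothing in Theorem \ref{thm:decomp} or its proof guarantees this --- the construction there uses bump functions whose Lipschitz constants blow up as the bumps narrow, and no control on $\sum_i w_iL_i$ is established. Fix (b), enlarging $b_n$ to $b_n\vee f(n)$, cannot work with a fixed schedule $f$: for any prescribed $f$ there is a density whose Lipschitz constants $L_{k_n}$ outgrow $f(n)$, so $f$ would have to depend on the unknown $p$, which is not permitted for a single sequence of estimators. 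There is also a more basic scope problem: Theorem \ref{thm:decomp} only covers a.e.\ continuous pdfs, whereas universal consistency is claimed on all of $\sD_d$.

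The paper sidesteps all of this. Since $V_n\in\sH_{d,b_n}^{k_n}$, the bound \eqref{eqn:finiteant} from Proposition \ref{prop:finiteant} still applies verbatim, with bias term $\min_{q\in\sH_{d,b_n}^{k_n}}\|p-q\|_1$ appearing directly (no NL-spectrum needed). Lemma \ref{lem:lrbias} then states that this approximation error tends to $0$ for \emph{every} $p\in\sD_d$ whenever $k_n,b_n\to\infty$, and the stochastic terms you already showed vanish. That is the whole universal-consistency argument. The lesson is that the NL-spectrum machinery is only needed to get the \emph{rate} on the NL-class; for plain consistency the raw approximation bound plus the pre-existing bias lemma suffice, and you should not reintroduce NL-spectra where they create rather than solve difficulties.

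(Your Borel--Cantelli upgrade to almost-sure convergence is a harmless strengthening but is not what the proposition claims, and the paper keeps $\delta$ fixed; the claimed $\tilde O_n$ rate is an in-probability statement.)
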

Universal consistency holds on $\sD_d$; if the sampling density $p\in \sD_d$ is fixed then $\left\|p-V_n\right\|_1 \cip 0$. Here the decay in $w$ works against growth in $L$ with regards to convergence rate. Leaving $\beta$ fixed and letting $\alpha \to \infty$ this estimator approaches a rate of $\tilde{O}_n\left(1/\sqrt[3]{n}\right)$ which matches the rate of convergence in \cite{vandermeulen21} for finite rank densities. This approximately matches the optimal rate of convergence for one-dimensional histograms in \cite{gyorfi85}, ``for smooth densities, the average $L^1$ error for the histogram estimate must vary at least as $n^{-1/3}$.''
\section{Proofs}\label{sec:proofs}
This section is broken up into two subsections. The fist subsection builds towards and then proves Theorem \ref{thm:decomp} and the second subsection proves estimator bounds and rates. Before proving the results of this paper, more notation must be introduced. 

Let $\sF$ be the set of smooth, Lipschitz continuous pdfs on $\rn$. Let $\sF^d \triangleq \left\{f_1 \times \cdots \times f_d \mid f_i \in \sF \right\}$, note that these are also pdfs. Recall that the \emph{conical hull} of a set $S$ in a real-valued vector space is
  $\cone\left(S\right) \triangleq \left\{ \sum_{i=1}^n w_i s_i\mid n\in \nn, w_i\ge 0,s_i\in S\right\}$.
The $\cone$ operator will only be applied to $\sF^d$.
Though standard notation, the reader is reminded that, for a set $S$, $S^{\nn}$ is the set of all infinite sequences of elements of $S$: $S^{\nn} \triangleq \left\{\left(s_i\right)_{i=1}^\infty \mid s_i \in S \right\}$. Finally $\lambda$ denotes the Lebesgue measure where dimension will always be clear from context.
\subsection{Towards a Proof of Theorem \ref{thm:decomp}}
The following lemma states that under an indicator function on a multivariate interval, $\prod_{i=1}^d\ind_{\left(a_i,b_i\right)}$, one can fit a positively scaled element of $\sF^d$ that approximates it arbitrarily well in $L^1$ distance.
\begin{lemma}\label{lem:bump-approx}
  Let $\varepsilon >0$ and $I = \prod_{i=1}^d \ind_{\left(a_{i},b_{i}\right)}$. There exists $f\in \sF^d$ and $w\ge 0$ such that $wf\le I$  and $\int I - wf d\lambda \le\varepsilon$.
\end{lemma}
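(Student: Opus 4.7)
The plan is to build the required element of $\sF^d$ by first constructing a one-dimensional ``scaled bump'' that sits under $\ind_{(a,b)}$ and captures almost all of its mass, then taking a product over coordinates.

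For the one-dimensional ingredient, fix an interval $(a,b)$ and a small parameter $\delta > 0$. I would take a standard non-negative smooth mollifier $\phi_\delta$ supported on $[-\delta/2,\delta/2]$ with $\int \phi_\delta \, d\lambda = 1$ and let $h_\delta \triangleq \phi_\delta * \ind_{[a+\delta,b-\delta]}$. Standard mollification properties give that $h_\delta$ is smooth, supported in $(a,b)$, pointwise bounded above by $1$, and has integral $b-a-2\delta$. Therefore $h_\delta \leq \ind_{(a,b)}$, and upon setting $c_\delta \triangleq b-a-2\delta$ and $g_\delta \triangleq h_\delta/c_\delta$, the function $g_\delta$ is a smooth pdf; since $h_\delta$ is smooth with compact support its derivative is bounded, so $g_\delta$ is Lipschitz continuous, i.e. $g_\delta \in \sF$. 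By construction $c_\delta g_\delta = h_\delta \leq \ind_{(a,b)}$ and $\int \left(\ind_{(a,b)} - c_\delta g_\delta\right) d\lambda = 2\delta$.

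For the $d$-dimensional lemma, apply the construction coordinatewise: for each $i\in[d]$ pick $\delta_i>0$ (to be tuned) and let $g_i \triangleq g_{\delta_i}$ and $c_i \triangleq c_{\delta_i} = b_i - a_i - 2\delta_i$ associated with $(a_i,b_i)$. Set $f \triangleq g_1 \times \cdots \times g_d \in \sF^d$ and $w \triangleq \prod_{i=1}^d c_i \geq 0$. Since every factor is non-negative and $c_i g_i(x_i) \le \ind_{(a_i,b_i)}(x_i)$ pointwise, the product inequality $w f \le I$ holds on all of $\rn^d$. For the $L^1$ gap I would telescope:
\begin{equation*}
  \int I - wf \, d\lambda \;=\; \prod_{i=1}^d (b_i-a_i) \;-\; \prod_{i=1}^d c_i \;=\; \sum_{k=1}^d \left(\prod_{i<k} c_i\right) \bigl((b_k-a_k) - c_k\bigr) \left(\prod_{i>k} (b_i-a_i)\right),
\end{equation*}
which, using $c_i \le b_i - a_i$ and $(b_k-a_k)-c_k = 2\delta_k$, is bounded by $2\sum_{k=1}^d \delta_k \prod_{i\ne k}(b_i-a_i)$. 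Choosing each $\delta_k$ small enough (e.g.\ $\delta_k \le \varepsilon / \bigl(2d\max\{1,\prod_{i\ne k}(b_i-a_i)\}\bigr)$) makes this at most $\varepsilon$, completing the proof.

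There is no real obstacle here; the only subtle point is to make sure the one-dimensional bump genuinely lies \emph{below} $\ind_{(a,b)}$ (not merely approximates it in $L^1$), which is why I would mollify a shrunk indicator rather than, say, normalize a Gaussian truncated to $(a,b)$. The telescoping identity is the routine tool that converts coordinatewise $L^1$-closeness into joint $L^1$-closeness while preserving the pointwise domination $wf \le I$.
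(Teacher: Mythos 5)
Your proof is correct, and it takes a somewhat more direct route than the paper's. The one-dimensional ingredient is essentially the same in both (a smooth function squeezed under $\ind_{(a,b)}$ with small mass deficit, then normalized and its integral used as the weight); you obtain it by mollifying a shrunk indicator, the paper by invoking a bump function directly, but these are interchangeable. The difference is in how you lift to dimension $d$: the paper proceeds by induction, applying the hypothesis to split $I = J \times J'$ and then telescoping at the level of functions via $J\times J' - ww'f\times f' = J\times(J'-w'f') + (J-wf)\times w'f'$. You instead construct the full $d$-fold product at once and observe that $\int I \, d\lambda = \prod_i (b_i - a_i)$ and $\int wf \, d\lambda = w = \prod_i c_i$, so the $L^1$ gap collapses to a scalar telescoping identity on real numbers. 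Your route avoids the induction entirely and is arguably cleaner, since the pointwise domination $wf \le I$ is immediate from the factorwise domination $c_i g_i \le \ind_{(a_i,b_i)}$ and the nonnegativity of all factors. One small bookkeeping point you leave implicit: the constraint $\delta_k < (b_k - a_k)/2$ is needed so that $c_k > 0$ and $g_{\delta_k}$ is well-defined; this is compatible with your smallness requirement on $\delta_k$, and it also quietly disposes of the degenerate case $\varepsilon \ge \int I\, d\lambda$ (which the paper handles by setting $w=0$) since your construction is valid for every $\varepsilon > 0$ regardless. Both proofs also need to note that the mollified/bump function is Lipschitz by virtue of having a continuous compactly supported derivative, which you do.
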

\begin{proof}\textbf{of Lemma \ref{lem:bump-approx}}
  The proof will proceed by induction on dimension, $d$. \newline
  \textbf{Base Case, $d=1$:} If $\varepsilon \ge b_1-a_1$ one can simply select any $f\in \sF$ and let $w=0$, giving 
    $wf = 0 \le I$
     and 
    $\int I - wfd \lambda  = b_1 - a_1 - 0 \le \varepsilon$,
  so the lemma holds when $\varepsilon \ge b_1-a_1$. The remainder of this case will proceed with $\varepsilon < b_1-a_1$ and the ``1'' subscript dropped.
  
  The base case will be proven using smooth \emph{bump functions}; see Section 13.1 in \cite{tu10} for a technical treatment of bump functions. This proof only necessitates a very simple set of bump functions: for any $c_1<c_2<c_3<c_4\in \rn$ there exists a smooth function, $\rho$, with $\rho$ equal to zero on $\left(c_1,c_4\right)^C$, $\rho$ equal to $1$ on $\left(c_2, c_3\right)$, and $\rho$ in $[0,1]$ elsewhere.
  Using this, let $\tf$ be a smooth function like $\rho$ with $c_1 = a,  c_2 = a+\varepsilon/4, c_3 =  b -\varepsilon/4, c_4 = b$.
  Let $f = \tf/\int \tf d\lambda$ and $w = \int \tf d\lambda$ so $\tf = wf$ and $f$ is a pdf with $wf\le I$.
  
  Because $f$ is smooth its first derivative exists everywhere and is continuous. Since $df$ is continuous on the compact set $[a,b]$, $df$ bounded on $[a,b]$. Because $f$ is zero on $[a,b]^C$, $df$ is identically zero on that set.  From this it follows that $df$ is bounded and therefore $f$ is Lipschitz continuous, in addition to smooth, and thus $f\in \sF$.
  The following inequality then finishes the base case $d=1$,
  \begin{align*}
    \int& I - wf d\lambda\\
    &= \int_{(a+\varepsilon/4, b- \varepsilon/4)} \ind_{(a,b)} - wf d\lambda + \int_{(a, b)^C} \ind_{(a,b)} - wf d\lambda 
     + \int_{(a, a+\varepsilon/4)\cup (b- \varepsilon/4 ,b )} \ind_{(a,b)} - wf d\lambda\\
    &=  \int_{(a, a+\varepsilon/4)\cup (b- \varepsilon/4 ,b )} \ind_{(a,b)} - wf  d\lambda
    \le  \int_{(a, a+\varepsilon/4)\cup (b- \varepsilon/4 ,b )} 1  d\lambda
     = \varepsilon/2.
  \end{align*}
  \noindent\textbf{Induction Step:} Suppose the lemma holds for some $d \in \nn$. Let $\varepsilon >0$ and $I = \ind_{\left(a_1,b_1\right)}\times \cdots \times \ind_{\left(a_d,b_d\right)} \times \ind_{\left(a_{d+1},b_{d+1}\right)}$.
   Let $J= \ind_{\left(a_1,b_1\right)}\times \cdots \times \ind_{\left(a_d,b_d\right)}$ and $J'=\ind_{\left(a_{d+1},b_{d+1}\right)}$, so $I = J \times J'$.
   From the induction hypothesis there exist $w,w'\ge 0$, $f \in \sF^d$, and $f' \in \sF$, such that $wf \le J$, $w' f' \le J'$, 
  \begin{equation*}
    \int J - w f d\lambda \le \frac{\varepsilon}{4 \int J' d\lambda},\text{ and }
  \int J'- w' f' d \lambda \le \frac{\varepsilon}{4 \int J d \lambda   }. 
  \end{equation*}
  Note that $f\times f' \in \sF^{d+1}$ and $w w' f\times f' \le I$. The following completes the proof
  \begin{align*}
    \int J \times J' - w w' f\times f'd \lambda
    &=\int J \times J' - J \times w' f' + J \times w' f'- w w' f \times f' d \lambda\\
    &=\int J \times \left(J' - w' f'\right) + \left(J- w f\right) \times w' f'  d\lambda\\
    &=\int Jd \lambda \int \left(J' - w' f'\right) d \lambda + \int \left(J-wf\right) d \lambda \int w' f' d \lambda\\
    &\le\int J d \lambda \frac{\varepsilon}{4 \int J d\lambda} + \frac{\varepsilon}{4 \int J' d \lambda   }  \int J' d \lambda
    = \varepsilon /2.
  \end{align*}
\end{proof}
The following lemma shows that, for any function in a class of sufficiently regular non-negative functions, one can find an element in the conical hull of $\sF^d$ that fits under the function and approximates that function arbitrarily well in $L^1$ distance.

\begin{lemma}\label{lem:finite-approx}
  Let $p:\rn^d \to \rn$ be non-negative, compactly supported, bounded, and a.e. continuous and let $\varepsilon >0$. There exists $k\in \nn$, $w_1,\ldots,w_k\ge0$, and  $f_1,\ldots,f_k \in \sF^d $ such that
  \begin{equation*}
    \int p - \sum_{i=1}^k w_i f_i d\lambda \le\varepsilon \quad \text{ and } \quad p \ge \sum_{i=1}^k w_i f_i. 
  \end{equation*}
\end{lemma}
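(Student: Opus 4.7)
The plan is to bridge $p$ and the conical hull of $\sF^d$ in two stages: first approximate $p$ from below by a finite linear combination of box indicators (a lower Riemann sum), then replace each box indicator by a scaled element of $\sF^d$ via Lemma \ref{lem:bump-approx}.

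For the first stage, fix a cube $K$ containing $\supp p$. For each $N \in \nn$, partition $K$ into $M=M(N)$ boxes $C_1,\ldots,C_M$ of side $1/N$ with pairwise disjoint interiors. Let $m_i$ denote the essential infimum of $p$ on $C_i$ and set $\tp_N \triangleq \sum_{i=1}^M m_i \ind_{C_i}$, so that $\tp_N \le p$ a.e. At every continuity point $x$ of $p$ the oscillation of $p$ on the cube of side $1/N$ containing $x$ shrinks to $0$, whence $\tp_N(x) \to p(x)$; since $p$ is a.e.\ continuous this convergence holds a.e. Combined with $0 \le \tp_N \le p \le \|p\|_\infty \ind_K \in L^1$, dominated convergence gives $\int (p - \tp_N)\, d\lambda \to 0$, so I fix $N$ with $\int (p - \tp_N)\, d\lambda \le \varepsilon/2$.

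For the second stage, apply Lemma \ref{lem:bump-approx} to each $\ind_{C_i}$ with tolerance $\eta \triangleq \varepsilon/(2M(\|p\|_\infty + 1))$, producing $w_i' \ge 0$ and $f_i \in \sF^d$ with $w_i' f_i \le \ind_{C_i}$ and $\int (\ind_{C_i} - w_i' f_i)\, d\lambda \le \eta$. Set $w_i \triangleq m_i w_i'$. Inspection of the construction in Lemma \ref{lem:bump-approx} shows that each $f_i$ is supported in $\overline{C_i}$ and vanishes on $\partial C_i$ (bump functions vanish at their extreme endpoints, and the inductive product preserves this on the box boundary), so the $w_i f_i$ have pairwise essentially disjoint supports and hence $\sum_{i} w_i f_i \le \sum_i m_i \ind_{C_i} = \tp_N \le p$ a.e. The total $L^1$ error is
\begin{equation*}
  \int \l(p - \sum_{i=1}^M w_i f_i\r) d\lambda \le \frac{\varepsilon}{2} + \sum_{i=1}^M m_i \eta \le \frac{\varepsilon}{2} + M \|p\|_\infty \eta \le \varepsilon,
\end{equation*}
establishing the lemma with $k \le M$ (discarding indices where $m_i = 0$, or choosing an arbitrary $f_i \in \sF^d$ and $w_i = 0$ for such indices).

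The main obstacle is the pointwise-a.e.\ convergence $\tp_N \to p$: this is the only place the a.e.\ continuity hypothesis is used, and it is precisely what separates a.e.\ continuous pdfs from arbitrary bounded, compactly supported $L^1$ functions, for which lower Riemann sums on nested grids need not converge to $p$ in $L^1$ from below. The remaining bookkeeping — especially ensuring the bump replacements do not over-count along shared cube boundaries — is then handled by the boundary-vanishing property built into the construction of Lemma \ref{lem:bump-approx}.
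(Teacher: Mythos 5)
Your proof follows the same two-stage plan as the paper's: first approximate $p$ from below by a lower step function within $\varepsilon/2$, then replace each box indicator by a scaled element of $\sF^d$ via Lemma \ref{lem:bump-approx}. The only substantive difference is in stage one: the paper simply invokes the Riemann--Lebesgue theorem (a bounded, compactly supported, a.e.\ continuous function is Riemann hence Darboux integrable) and takes a lower Darboux sum, whereas you build the step function explicitly and prove its $L^1$ convergence from below via dominated convergence together with a.e.\ pointwise convergence at continuity points. Both are correct; yours is more self-contained, the paper's is more concise. One small remark: the observation about boundary-vanishing and essentially disjoint supports of the $w_i f_i$ is not needed. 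Since $w_i f_i \le m_i \ind_{C_i}$ holds termwise, summing already gives $\sum_i w_i f_i \le \sum_i m_i \ind_{C_i} = \tp_N \le p$ regardless of any overlap, which is in fact how the paper argues (its Darboux boxes also share boundaries, and no disjointness argument is invoked there either).
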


\begin{proof}\textbf{of Lemma \ref{lem:finite-approx}}
  Because $p$ is compactly supported, bounded, and a.e. continuous it is Riemann integrable as a consequence of the Riemann-Lebesgue Theorem  and therefore Darboux integrable\footnote{The facts used here are common in single variable analysis texts. For a complete treatment of the multivariate versions of these facts see Sections 2 and 3 in Chapter IV in \cite{edwards94}. Note that the terminology in that work is somewhat nonstandard.}. Thus, using a lower Darboux sum, there exists a ``step function,''
    $\phi = \sum_{i=1}^k w'_i \prod_{j=1}^d\ind_{\left(a_{i,j},b_{i,j}\right)}$,
  with $w'_i \ge 0$, such that $\phi\le p$ and $\int p-\phi d\lambda \le \varepsilon/2$. Let $I_i = \prod_{j=1}^d \ind_{\left(a_{i,j},b_{i,j}\right)}$ for brevity. From Lemma \ref{lem:bump-approx} there exists $f_1,\ldots,f_k \in \sF^d$ and $w_1,\ldots,w_k \ge 0$ such that $w_i f_i \le w'_i I_i$ and $\int w_i' I_i - w_i f_i d\lambda \le \varepsilon / (2n)$. The following now completes the proof,
  \begin{align*}
    \int p - \sum_{i=1}^k w_i f_i d\lambda 
    &= \int p - \sum_{i=1}^k w'_i I_i + \sum_{i=1}^k w'_i I_i - \sum_{i=1}^k w_i f_i d\lambda\\
    &= \left(\int p - \sum_{i=1}^k w'_i I_i d \lambda \right) +\left(\sum_{i=1}^k\int w'_i I_i -  w_i f_i d\lambda\right)\\
    &\le \varepsilon/2 +  n \frac{\varepsilon}{2n} = \varepsilon.
  \end{align*}
\end{proof}
\begin{lemma}\label{lem:compact-decomposition}
  Let $p:\rn^d \to \rn$ be non-negative, compactly supported, bounded, and a.e. continuous. There exists a non-negative sequence $\left(w_i\right)_{i=1}^\infty$ and $\left(f_i \right)_{i=1}^\infty \in \left(\sF^d\right)^{\nn} $ such that $p = \sum_{i=1}^\infty w_i f_i$.
\end{lemma}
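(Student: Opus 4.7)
The plan is to iterate Lemma \ref{lem:finite-approx} with decreasing error tolerances and concatenate the resulting finite conical combinations into a single infinite series. Set $p_0 \triangleq p$ and choose a summable sequence of tolerances, say $\varepsilon_n = 2^{-n}$. At step $n\ge 1$, apply Lemma \ref{lem:finite-approx} to $p_{n-1}$ to produce a finite sum $g_n = \sum_{i=1}^{k_n} w_{n,i} f_{n,i}$ with $f_{n,i}\in\sF^d$, $w_{n,i}\ge 0$, $g_n \le p_{n-1}$, and $\int p_{n-1} - g_n \,d\lambda \le \varepsilon_n$. Then define $p_n \triangleq p_{n-1} - g_n$.

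The key verification is that $p_n$ again satisfies the hypotheses of Lemma \ref{lem:finite-approx}, so the induction goes through. Non-negativity follows from $g_n \le p_{n-1}$. Since the bump-based marginals constructed in Lemma \ref{lem:bump-approx} are compactly supported, each $f_{n,i}\in\sF^d$ used here is compactly supported, hence $g_n$ is continuous (in fact smooth) and bounded, and one has $\supp(p_n)\subseteq\supp(p)$, which is compact. Boundedness of $p_n$ follows from $0\le p_n \le p$, and a.e.\ continuity of $p_n$ follows from a.e.\ continuity of $p$ together with continuity of $g_1+\cdots+g_n$. Finally, $\int p_n\,d\lambda \le \varepsilon_n \to 0$.

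To finish, concatenate the sequences: list the pairs $(w_{n,i},f_{n,i})$ in any order into a single sequence $(w_j, f_j)_{j=1}^\infty$ in $\rn_{\ge 0}\times\sF^d$. The partial sums $S_N \triangleq \sum_{n=1}^N g_n$ are non-negative and monotone non-decreasing in $N$ (since each $g_n\ge 0$) and are dominated by $p$ (since $S_N = p - p_N \le p$). Hence $S_N$ converges pointwise a.e.\ to some measurable $S$ with $0\le S\le p$. By monotone convergence,
\begin{equation*}
    \int (p - S)\,d\lambda = \lim_{N\to\infty} \int (p - S_N)\,d\lambda = \lim_{N\to\infty} \int p_N\,d\lambda \le \lim_{N\to\infty}\varepsilon_N = 0,
\end{equation*}
so $S = p$ a.e. Reindexing the concatenated sequence yields $p = \sum_{j=1}^\infty w_j f_j$ with $w_j \ge 0$ and $f_j\in\sF^d$, as required.

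The only non-routine point is the preservation of the hypotheses of Lemma \ref{lem:finite-approx} under subtraction of $g_n$; once one notices that Lemma \ref{lem:bump-approx} actually produces compactly supported smooth factors, everything else is a straightforward monotone-convergence argument. A cosmetic concern is that concatenation may introduce $w_j = 0$ terms if some of the $w_{n,i}$ vanish, but those summands can simply be discarded without changing the identity.
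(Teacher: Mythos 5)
Your proof is correct and follows essentially the same route as the paper's: iterate Lemma \ref{lem:finite-approx} with geometrically decaying tolerances, observe that the hypotheses are preserved, and then flatten the resulting double sum into a single series using non-negativity. One small simplification worth noting: the compact support of $p_n$ follows directly from $0\le p_n\le p$ (so $\supp(p_n)\subseteq\supp(p)$), with no need to invoke the compact supports of the bump-function marginals from Lemma \ref{lem:bump-approx}.
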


\begin{proof}\textbf{of Lemma \ref{lem:compact-decomposition}}
  From Lemma \ref{lem:finite-approx} there exists $q_1 \in \cone \left(\sF^d\right)$ such that $p\ge q_1$ and $\int p - q_1 d\lambda \le1/2$. Note that $p-q_1$ is non-negative, compactly supported, bounded, and a.e continuous. Proceeding by induction there exists a sequence $\left(q_i\right)_{i=1}^\infty \in \cone\left(\sF^d\right)^{\nn}$ satisfying $\int p -\sum_{i=1}^k q_i d\lambda \le1/2^k$ and $p \ge \sum_{i=1}^k q_i$, for all $k$. Since, $\int p -\sum_{i=1}^k q_i d\lambda =\int \left|p -\sum_{i=1}^k q_i \right| d\lambda  = \left\| p -\sum_{i=1}^k q_i\right\|_1$, the summation is a Cauchy sequence in $L^1$ and $\sum_{i=1}^\infty q_i = p$. Since $q_i \in \cone\left(\sF^d \right)$ one can write $q_i = \sum_{j=1}^{k_i}w_{i,j} f_{i,j}$, with $w_{i,j}\ge 0, f_{i,j} \in \sF^d$.
  From this,
    $p = \sum_{i=1}^\infty \sum_{j=1}^{k_i}w_{i,j} f_{i,j}$.
  Observe that, 
     $\left\|p\right\|_1 = \left\|\sum_{i=1}^\infty \sum_{j=1}^{k_i}w_{i,j} f_{i,j}\right\|_1 < \infty$,
  and since $w_{i,j} f_{i,j}$ are non-negative functions it follows that
  \begin{equation}
    \infty 
    > \left\|\sum_{i=1}^\infty \sum_{j=1}^{k_i}w_{i,j} f_{i,j}\right\|_1 
    = \sum_{i=1}^\infty \left\|\sum_{j=1}^{k_i}w_{i,j} f_{i,j}\right\|_1 
    = \sum_{i=1}^\infty \sum_{j=1}^{k_i}\left\|w_{i,j} f_{i,j}\right\|_1. \label{eqn:nonneg-summable}
  \end{equation}
  So the set of all $w_{i,j}f_{i,j}$ is absolutely summable in $L^1$.  Thus, by relabeling, there exist $w_i \ge 0$ and $f_i \in \sF^d$ such that  $\sum_{i=1}^\infty w_i f_i = p$.
\end{proof}
Lemma \ref{lem:compact-decomposition} can now be generalized to prove Theorem \ref{thm:decomp}.
\begin{proof}\textbf{of Theorem \ref{thm:decomp}}
  Let $p$ be an a.e. continuous pdf. One can clearly decompose $p$ into the absolutely summable summation $p=\sum_{i=1}^\infty p_i$ where $p_i$ are all non-negative, a.e. continuous, compactly supported functions. For example, one can simply break $p$ into components supported on $\left\{\prod_{i=1}^d \left[a_i, a_i+1 \right]\mid a_1,\ldots,a_d \in \mathbb{Z} \right\}$ which is a countable set. One summand in this summation will be considered, so fix some $p_i$. 
  
  It will now be shown that $p_i = \sum_{j=0}^\infty q_{i,j}$ where $q_{i,j}$ are bounded, a.e. continuous, compactly supported, non-negative functions. For this define $q_{i,j} = \max\left(\min\left(p_i-j,1\right),0 \right)$.
  Clearly $1\ge q_{i,j} \ge 0$ for all $j$ so $q_{i,j}$ are bounded and non-negative. For arbitrary $x$, if $p_i(x) = 0$ then 
  \begin{equation*}
    q_{i,j}(x) 
    = \max\left(\min\left(p_i(x)-j,1\right),0 \right)
    = \max\left(\min\left(0-j,1\right),0 \right)
    = \max\left(0-j,0 \right)
    = 0,
  \end{equation*} 
  so $p_i(x) = 0 \Rightarrow q_{i,j}(x)=0$, and thus $\supp\left(q_{i,j}
  \right) \subseteq \supp\left(p_i\right)$ and since the support of $p_i$ is compact the support of $q_{i,j}$ is compact. The $q_{i,j}$ are a.e continuous by virtue of the fact that $\max$, $\min$, and subtraction preserve pointwise continuity.

  It will now be shown that $\sum_{j=0}^\infty q_{i,j} = p_i$. Consider some arbitrary $x\in \rn^d$. There exists some $\ell\in \nn\cup\{0\}$ such that $p_i(x) \in [\ell,\ell+1]$. It follows that
  \begin{align*}
    \sum_{j=0}^\infty q_{i,j}(x)
    =&\sum_{j=0}^{\ell-1} \max\left(\min\left(p_i(x)-j,1\right),0 \right)
    + \max\left(\min\left(p_i(x)-\ell,1\right),0 \right)\\
    &\cdots +\sum_{j=\ell+1}^{\infty}\max\left(\min\left(p_i(x)-j,1\right),0 \right)\\
    =&\sum_{j=0}^{\ell-1} \max\left(1,0 \right)
    + \max\left(p_i(x)-\ell,0 \right)
    +\sum_{j=\ell+1}^{\infty}\max\left(p_i(x)-j,0 \right)\\
    =&\ell+ p_i(x)-\ell+\sum_{j=\ell+1}^{\infty}0
    =p_i(x).
  \end{align*}
    Since the $q_{i,j}$ are all non-negative, using the same argument as \eqref{eqn:nonneg-summable} in the proof of Lemma \ref{lem:compact-decomposition}, it follows that the sum $\sum_{i=1}^\infty \sum_{j=0}^ \infty q_{i,j}$ is $L^1$ absolutely summable.
    From Lemma \ref{lem:compact-decomposition} it follows that for all $q_{i,j}= \sum_{\ell=1}^\infty w_{i,j,\ell} f_{i,j,\ell}$ with $w_{i,j,\ell} \ge 0$ and $f_{i,j,\ell} \in \sF^d$ and $p$. Substituting these in yields $p = \sum_{i=1}^\infty  \sum_{j=0}^\infty\sum_{\ell=1}^\infty w_{i,j,\ell} f_{i,j,\ell} =p$, which again is absolutely summable. Because this summation is over a countable set, one can relabel $p = \sum_{i=1}^\infty w_i f_i$. Because $f_i \in \sF^d$, $f_i = \prod_{j=1}^d f_{i,j}$ with $f_{i,j}$ being Lipschitz continuous, a sequence can be constructed, $\left(L_i\right)_{i=1}^\infty$, such that $f_{i,j}$ are all $L_i$-Lipschitz continuous. Finally note $1 = \int \sum_{i=1}^\infty w_i f_i =  \sum_{i=1}^\infty w_i$. It follows that $p$ has NL-spectrum $(w,L)$.
\end{proof}

\subsection{Proofs of Estimator Bounds and Rates}
The estimator results build upon results in \cite{vandermeulen21} that analyzed histogram-style estimators of densities on $d$-dimensional unit cubes. Before presenting these results a bit more notation is required. In \cite{vandermeulen21} the authors define a notion of a low-rank histogram. For this context a ``histogram'' refers to a pdf that is constant on a partition containing equally spaced cubes. In particular they define $\sH_{1,b}$ to be the set of all histograms on the unit interval with $b$ evenly spaced bins i.e.  
\begin{equation*}
  \sH_{1,b} \triangleq \left\{\sum_{i=1}^b w_i \ind_{[(i-1)/b,i/b]}\mid w_i\ge 0, \int \sum_{i=1}^b w_i \ind_{[(i-1)/b,i/b]}d\lambda = 1  \right\}.
\end{equation*}
A low-rank histogram on $d$-dimensional space, with $b$ bins per dimension, is then defined as
\begin{equation*}
  \sH_{d,b}^k \triangleq \left\{\sum_{i=1}^k w_i \prod_{j=1}^d h_{i,j}\mid 0 \le w_i, \sum_{i=1}^k w_i = 1, h_{i,j} \in \sH_{1,b} \right\}.
\end{equation*}
Let $\lip_L$ be the set of $L$-Lipschitz continuous from $\rn$ to $\rn$.

The following proposition was proven in \cite{vandermeulen21} and gives a bound on how well one can select estimators from $\sH_{d,b}^k$.
\begin{proposition}[Proposition 2.1 from \cite{vandermeulen21}]
  \label{prop:finiteant}
    Let $d,b,k,n \in \nn$ and $0<\delta\le 1$. There exists an estimator $V_n \in \sH_{d,b}^k$ such that 
        \begin{equation}
          \sup_{p \in \sD_d}P\left(\|p-V_n\|_1 > 3\min_{q\in\sH_{d,b}^k}\|p-q\|_1+ 7\sqrt{\frac{2bdk\log(4bdkn)}{n}}+7\sqrt{\frac{\log(\frac{3}{\delta})}{2n}}\right)<\delta,\label{eqn:finiteant}
        \end{equation}
    where $V_n$ is a function of $X_1,\ldots,X_n\simiid p$.
\end{proposition}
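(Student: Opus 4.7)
The plan is to construct $V_n$ via a Yatracos-type minimum-distance estimator in the spirit of Devroye and Lugosi, applied to a fine finite $\epsilon$-net of $\sH_{d,b}^k$, and then to control the approximation error through the oracle inequality for this procedure and the stochastic error by a union bound.

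First, I would build an $\epsilon$-net $\mathcal{N}\subset\sH_{d,b}^k$ in $L^1$. An element of $\sH_{d,b}^k$ is determined by $k$ mixing weights on the simplex together with $kd$ one-dimensional histograms, each encoded as a probability vector of length $b$. Rounding every coordinate to a grid of spacing $O(1/(bdkn))$ yields a net with $|\mathcal{N}|\le (4bdkn)^{O(bdk)}$ such that every $q\in \sH_{d,b}^k$ lies within $L^1$-distance $O(1/n)$ of some net point. Consequently,
\begin{equation*}
  \min_{q'\in\mathcal{N}}\|p-q'\|_1 \;\le\; \min_{q\in\sH_{d,b}^k}\|p-q\|_1 + O(1/n),
\end{equation*}
and this $O(1/n)$ slack is later absorbed into the $\sqrt{\cdot/n}$ stochastic terms.

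Next, let $V_n$ be the element of $\mathcal{N}$ minimizing the Yatracos criterion
\begin{equation*}
  \sup_{A\in\mathcal{A}}\left|\frac{1}{n}\sum_{i=1}^n\ind_A(X_i) \;-\; \int_A f\,d\lambda\right|
\end{equation*}
over $f\in\mathcal{N}$, where $\mathcal{A}=\{\{f>g\}:f,g\in\mathcal{N}\}$ is the induced Yatracos class. The classical Devroye--Lugosi oracle inequality then yields
\begin{equation*}
  \|p - V_n\|_1 \;\le\; 3\min_{q\in\mathcal{N}}\|p - q\|_1 \;+\; 4\sup_{A\in\mathcal{A}}\left|\frac{1}{n}\sum_{i=1}^n\ind_A(X_i) - P(A)\right|.
\end{equation*}
A Hoeffding union bound over the $|\mathcal{A}|\le|\mathcal{N}|^2 = (4bdkn)^{O(bdk)}$ sets, combined with a separate Hoeffding bound for the $\sqrt{\log(3/\delta)/(2n)}$ slack in the confidence level, produces the two square-root terms in the stated inequality. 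By construction $V_n\in\mathcal{N}\subset\sH_{d,b}^k$, as required.

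The main technical obstacle is careful bookkeeping of constants: one must make the grid spacing fine enough that the $L^1$-error of approximating $\sH_{d,b}^k$ by $\mathcal{N}$ is dominated by the $\sqrt{bdk\log(bdkn)/n}$ term, while simultaneously keeping $\log|\mathcal{A}|=O(bdk\log(bdkn))$ so that the quantity inside the logarithm comes out as exactly $4bdkn$ rather than a larger polynomial in these parameters. The factor of $3$ in front of $\min_{q\in\sH_{d,b}^k}\|p-q\|_1$ is intrinsic to the Yatracos minimum-distance argument, and is precisely what allows arbitrary $p\in\sD_d$ (not assumed to lie in $\sH_{d,b}^k$) to be handled with only a multiplicative blow-up of its best in-class approximation error.
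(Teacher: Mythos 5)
The paper does not prove Proposition~\ref{prop:finiteant}; it is imported verbatim from \cite{vandermeulen21}, so there is no in-paper argument to compare your sketch against. That said, your route --- discretize $\sH_{d,b}^k$ to a finite $L^1$-net, run a Yatracos/Devroye--Lugosi minimum-distance selector over the net, and control the uniform deviation over the induced Yatracos class by a Hoeffding union bound --- is the standard and natural way to obtain a bound of exactly this shape, and all the structural features of \eqref{eqn:finiteant} (the multiplicative $3$ on the bias, a $\sqrt{bdk\log(bdkn)/n}$ term reflecting the parameter count, a $\sqrt{\log(1/\delta)/n}$ confidence term, and $V_n$ landing inside $\sH_{d,b}^k$) do fall out of this pipeline.

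Where your sketch is genuinely incomplete is precisely the place the statement is most specific: the constants. The vanilla Devroye--Lugosi oracle inequality gives $\|p-V_n\|_1 \le 3\min_q\|p-q\|_1 + 4\Delta_n$ (plus an additive $3/n$ if only approximate minimization is used), and with $\Delta_n \le \sqrt{\log(2|\mathcal{A}|/\delta)/(2n)}$ the natural $\sqrt{a+b}\le\sqrt a+\sqrt b$ split produces a coefficient of $4$ on both square-root terms, not $7$. Recovering the quoted $7$'s, the exact $2bdk$ and $4bdkn$ inside the first root, and the $3/\delta$ inside the second requires carrying the additive net-approximation slack and the $3/n$ term into the square roots explicitly, together with a concrete (not merely $O(\cdot)$) bound on $|\mathcal{N}|$. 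You correctly flag this bookkeeping as the main obstacle but do not carry it out, so as written the proposal establishes a version of the proposition with unspecified constants rather than the quoted one. Since those constants are reused verbatim in Theorems~\ref{thm:nl-finite} and~\ref{thm:nlc-finite}, that gap is not cosmetic here. There is also a small point worth making explicit: the $L^1$-mesh of a coordinatewise grid on $\sH_{d,b}^k$ scales like (number of factors) $\times$ (per-factor rounding error), so one should verify, via the same tensorization inequality as in \eqref{eqn:reiss}, that the chosen grid spacing really does give an $O(1/n)$ mesh and that this slack is dominated by the stochastic term (which requires $bdk \lesssim n$; when $bdk \gtrsim n$ the bound is vacuous anyway).
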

To begin proving Theorem \ref{thm:nl-finite}, the bias term, $\min_{q\in\sH_{d,b}^k}\|p-q\|_1$, is analyzed via NL-spectrum.

The following is a bound on rank-one bias that simplifies the analysis in \cite{vandermeulen21} (c.f. Theorem 2.5 in that paper).
\begin{lemma}\label{lem:l1approx}
  Let $f_1,\ldots,f_d \in \lip_L\cap \sD_1$. Then
    $\min_{h \in \sH_{d,b}^1} \left\|\prod_{i=1}^d f_i - h \right\|_1 \le \frac{dL}{\sqrt{12}b}$.
\end{lemma}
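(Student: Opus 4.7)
The plan is to construct a concrete approximating histogram by averaging each $f_i$ over the bins, bound the one-dimensional error per factor via a Cauchy--Schwarz/variance argument (which is where the $\sqrt{12}$ comes from), and then use the fact that all the factors are pdfs to telescope the product.

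\textbf{Step 1: Construct the approximating histogram.} For each $i$, define the one-dimensional histogram $h_i \in \sH_{1,b}$ by assigning, on each bin $I_k = [(k-1)/b, k/b]$, the constant value $c_{i,k} \triangleq b\int_{I_k} f_i\, d\lambda$. Then $h_i$ integrates to $1$, so $h_i \in \sD_1 \cap \sH_{1,b}$, and $\prod_{i=1}^d h_i \in \sH_{d,b}^1$ is a valid candidate.

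\textbf{Step 2: One-dimensional bound.} Fix a bin $I_k$ of length $1/b$ and let $U$ be uniform on $I_k$. Since $c_{i,k} = \mathbb{E}[f_i(U)]$, Cauchy--Schwarz gives
\begin{equation*}
  \int_{I_k} |f_i - c_{i,k}|\, d\lambda \;\le\; \sqrt{|I_k|}\,\sqrt{\int_{I_k}(f_i - c_{i,k})^2 d\lambda} \;=\; \frac{1}{b}\sqrt{\operatorname{Var}(f_i(U))}.
\end{equation*}
Because $f_i$ is $L$-Lipschitz, $\operatorname{Var}(f_i(U)) \le L^2 \operatorname{Var}(U) = L^2/(12 b^2)$, so $\int_{I_k} |f_i - c_{i,k}| d\lambda \le L/(\sqrt{12}\,b^2)$. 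Summing over the $b$ bins yields
\begin{equation*}
  \|f_i - h_i\|_1 \;\le\; \frac{L}{\sqrt{12}\,b}.
\end{equation*}

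\textbf{Step 3: Telescoping across the product.} Write the difference as a telescoping sum
\begin{equation*}
  \prod_{i=1}^d f_i - \prod_{i=1}^d h_i \;=\; \sum_{i=1}^d \Bigl(\prod_{j<i} h_j\Bigr) \times (f_i - h_i) \times \Bigl(\prod_{j>i} f_j\Bigr).
\end{equation*}
Each summand factors as a tensor product, and since every $h_j$ and every $f_j$ is a pdf on $[0,1]$ (hence non-negative with unit $L^1$ norm), the $L^1$ norm of each tensor-product summand equals $\|f_i - h_i\|_1$. Applying the triangle inequality and the bound from Step 2 gives
\begin{equation*}
  \min_{h \in \sH_{d,b}^1}\Bigl\|\prod_{i=1}^d f_i - h\Bigr\|_1 \;\le\; \Bigl\|\prod_{i=1}^d f_i - \prod_{i=1}^d h_i\Bigr\|_1 \;\le\; \sum_{i=1}^d \|f_i - h_i\|_1 \;\le\; \frac{dL}{\sqrt{12}\,b}.
\end{equation*}

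The only non-routine point is getting the constant $1/\sqrt{12}$ rather than a looser constant (e.g., $1$ from a naive Lipschitz oscillation bound $L/b$ on each bin). The Cauchy--Schwarz step combined with the variance identity $\operatorname{Var}(U) = (1/b)^2/12$ for $U$ uniform on an interval of length $1/b$ is what squeezes out this constant; everything else is a standard telescoping product argument that works cleanly precisely because the factors are pdfs.
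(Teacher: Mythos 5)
Your proof is correct, and the bound it yields is exactly the paper's. The two routes are equivalent in substance but differ in what is proven versus cited. The paper reduces the product error to a sum of one-dimensional errors via Lemma 3.3.7 of Reiss (1989); your Step 3 telescoping identity reproves that lemma from scratch in the special case of pdfs on $[0,1]$ (the key point being that the tensor factors are pdfs, so each summand's $L^1$ norm collapses to $\|f_i-h_i\|_1$). For the one-dimensional bound, the paper applies H\"older on $[0,1]$ to pass to $L^2$ and then cites Lemmas C.2/C.4 of \cite{vandermeulen21}, which state that the $L^2$ projection onto $\spn(\sH_{1,b})$ of an $L$-Lipschitz density has error at most $L/(\sqrt{12}b)$. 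Your bin-averaged histogram $h_i$ is exactly that $L^2$ projection, your per-bin Cauchy--Schwarz plays the role of the paper's global H\"older step, and your $\operatorname{Var}(f_i(U))\le L^2\operatorname{Var}(U)=L^2/(12b^2)$ estimate re-derives Lemma C.2. So what your version buys is self-containment (no external citations), at the cost of a somewhat longer argument; the paper's version is shorter because it leans on two cited results. Both arrive at the same constant by the same underlying mechanism.
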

  Note that a minimizer exists due to the compactness of the set $\sH_{d,b}^1$ (see Appendix A.1 in \cite{vandermeulen21}).
\begin{proof}\textbf{of Lemma \ref{lem:l1approx}}
  To begin,
  \begin{align}
    \min_{h \in \sH_{d,b}^1}&\left\|\prod_{i=1}^d f_i - h \right\|_1\notag\\
    & = \min_{h_1,\ldots,h_d \in \sH_{1,b}} \left\|\prod_{i=1}^d f_i - \prod_{j=1}^d h_j \right\|_1
     \le \min_{h_1,\ldots,h_d \in \sH_{1,b}} \sum_{i=1}^d\left\| f_i -  h_i \right\|_1 & \text{}\label{eqn:reiss}\\
    & = \min_{h_1,\ldots,h_d \in \sH_{1,b}} \sum_{i=1}^d\left\| \ind_{[0,1]} \cdot \left(f_i -  h_i \right) \right\|_1
     \le \min_{h_1,\ldots,h_d \in \sH_{1,b}} \sum_{i=1}^d\left\| \ind_{[0,1]}\right\|_2 \left\| f_i -  h_i  \right\|_2 \label{eqn:holder} \\
    & \le \min_{h_1,\ldots,h_d \in \sH_{1,b}} \sum_{i=1}^d \frac{L}{\sqrt{12}b}  \label{eqn:explain} 
    = \frac{dL}{\sqrt{12}b},
  \end{align}
  where \eqref{eqn:reiss} is a consequence of Lemma 3.3.7 in \cite{reiss89} (see Appendix \ref{appx:additional}) and \eqref{eqn:holder} follows from H\"older's inequality.
  To see \eqref{eqn:explain} note that Lemma C.2 and C.4 in \cite{vandermeulen21} state that, for $f_i \in \sD_1\cap \lip_L$,
  \begin{equation*}
    \left\|f_i - \proj_{\spn\left(\sH_{1,b}\right)}f_i\right\|_2 \le \frac{L}{\sqrt{12}b} \text{ and }
    \left\|f_i - \proj_{\spn\left(\sH_{1,b}\right)}f_i\right\|_2 = \min_{h\in \sH_{1,b}} \left\|f_i - h\right\|_2,
  \end{equation*}
  with the projection being in $L^2$ distance. This finishes the proof.
\end{proof}
In \cite{vandermeulen21} the authors investigated pdfs whose domains were $[0,1]^{\times d}$, not $\rn^d$. This is a rather subtle point, but this means that the density $p(x) =2x\ind_{[0,1]}(x)$ would be a valid 2-Lipschitz continuous density in that paper, but it is not in this work. The results from that work are applicable to densities in $\sD_d\cap \lip_L$. The results in this work could be adjusted to be slightly more general on the domain $[0,1]^{\times d}$, but this was omitted for simplicity.

The next lemma characterizes the bias in terms of NL-spectrum.
\begin{lemma} \label{lem:nl-approx}
  Let $p \in \sD_d$ have NL-spectrum $(w,L)$, then
  \begin{equation*}
    \min_{\sH_{d,b}^k} \left\|p - h \right\|_1 \le\frac{d}{\sqrt{12}b}\sum_{i=1}^k w_i L_i  + 2\sum_{i=k+1}^\infty w_i. 
  \end{equation*} 
\end{lemma}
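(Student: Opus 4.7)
The plan is to construct an explicit element $h^{*} \in \sH_{d,b}^{k}$, apply the triangle inequality, and then bound each resulting piece either by Lemma \ref{lem:l1approx} (the ``head'' terms) or by the fact that the $\prod_{j=1}^{d} p_{i,j}$ are pdfs (the ``tail'' terms). Specifically, for each $i \in [k]$, let $g_{i}$ be a minimizer of $\|\prod_{j=1}^{d} p_{i,j} - h\|_{1}$ over $h \in \sH_{d,b}^{1}$, which exists by the compactness of $\sH_{d,b}^{1}$. Lemma \ref{lem:l1approx} then gives $\|\prod_{j=1}^{d} p_{i,j} - g_{i}\|_{1} \le \frac{d L_{i}}{\sqrt{12}\,b}$, since each $p_{i,j}$ is a $L_i$-Lipschitz pdf on the support required by that lemma.

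The main subtlety is that elements of $\sH_{d,b}^{k}$ must have weights summing to $1$, whereas $\sum_{i=1}^{k} w_{i}$ may be strictly less than $1$. To handle this, I would absorb the tail mass into a single head coefficient: set $w_{1}^{\prime} \triangleq w_{1} + \sum_{i=k+1}^{\infty} w_{i}$ and $w_{i}^{\prime} \triangleq w_{i}$ for $i=2,\ldots,k$, so that $\sum_{i=1}^{k} w_{i}^{\prime} = 1$ and $h^{*} \triangleq \sum_{i=1}^{k} w_{i}^{\prime} g_{i} \in \sH_{d,b}^{k}$. The choice of which coefficient absorbs the excess mass is irrelevant; absorbing it into $g_{1}$ is notationally convenient.

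Then I would write
\begin{align*}
p - h^{*}
&= \sum_{i=1}^{\infty} w_{i} \prod_{j=1}^{d} p_{i,j} - \sum_{i=1}^{k} w_{i}^{\prime} g_{i} \\
&= \sum_{i=1}^{k} w_{i}\!\left(\prod_{j=1}^{d} p_{i,j} - g_{i}\right) + \sum_{i=k+1}^{\infty} w_{i} \prod_{j=1}^{d} p_{i,j} - \Bigl(\sum_{i=k+1}^{\infty} w_{i}\Bigr) g_{1}
\end{align*}
and invoke the triangle inequality. Using Lemma \ref{lem:l1approx} on the head terms bounds the first sum by $\frac{d}{\sqrt{12}\,b}\sum_{i=1}^{k} w_{i} L_{i}$, while each of the last two quantities is bounded by $\sum_{i=k+1}^{\infty} w_{i}$ because $\prod_{j=1}^{d} p_{i,j}$ and $g_{1}$ all have $L^{1}$ norm equal to $1$. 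Summing yields the stated bound.

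The whole argument is short; the only ``hard'' point is noticing that the normalization constraint of $\sH_{d,b}^{k}$ forces a factor of $2$ rather than $1$ in front of the tail sum $\sum_{i=k+1}^{\infty} w_{i}$, which arises because the absorbed tail mass contributes once through $\sum_{i=k+1}^{\infty} w_{i} \prod_{j} p_{i,j}$ and once through the added $\bigl(\sum_{i=k+1}^{\infty} w_{i}\bigr) g_{1}$ term.
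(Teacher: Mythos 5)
Your proof is correct and follows essentially the same strategy as the paper: build an explicit candidate in $\sH_{d,b}^k$ from the best rank-one histogram approximants to the first $k$ components, split the error into a head term, a tail term, and a normalization term via the triangle inequality, bound the head with Lemma \ref{lem:l1approx}, and charge $\sum_{i>k}w_i$ twice for the tail and the normalization. The only cosmetic difference is how the head weights are renormalized to sum to one: the paper rescales them all by $1/\sum_{j\le k}w_j$ (which is why it needs the $w_1>0$ convention), whereas you absorb the missing mass into a single coefficient; both choices contribute exactly $\sum_{i>k}w_i$, so the bounds coincide.
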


\begin{proof}\textbf{of Lemma \ref{lem:nl-approx}}
  Because $p$ has NL-spectrum $(w,L)$, $p = \sum_{i=1}^\infty w_i \prod_{j=1}^d p_{i,j}$,
  with $p_{i,j} \in \sD_d \cap \lip_{L_i}$ and $w_i\ge0$ for all $i$.
  Let $\tp_i = \arg \min_{h \in \sH_{d,b}^1}\left\|\prod_{j=1}^d p_{i,j} - h\right\|_1$ and $\tw_i = w_i/\left(\sum_{j=1}^kw_j \right)$ ($\tp_i$ exists since Lemma \ref{lem:l1approx} is a minimum and not an infimum recall that $w_1>0$ in Definition \ref{def:nl-spectrum} so $\tw_i$ all exist). So $\sum_{i=1}^k \tw_i \tp_i \in \sH_{d,b}^k$. Now the following bound can be shown,
  \begin{align}
    \left\|\sum_{i=1}^\infty w_i p_i - \sum_{j=1}^k \tw_j\tp_j\right\|_1
    &\le \left\|\sum_{i=1}^k w_i p_i - \sum_{j=1}^k \tw_j\tp_j\right\|_1 + \left\| \sum_{i=k+1}^\infty w_i p_i\right\|_1 \notag \\
    &= \left\|\sum_{i=1}^k w_i p_i - w_i \tp_i +w_i \tp_i -  \tw_i\tp_i\right\|_1 
    +  \sum_{i=k+1}^\infty  w_i \notag\\
    &\le \left\|\sum_{i=1}^k w_i p_i - w_i \tp_i\right\|_1
    + \left\|\sum_{i=1}^kw_i \tp_i -  \tw_i\tp_i\right\|_1 
    +  \sum_{i=k+1}^\infty  w_i. \label{eqn:three-terms}
  \end{align}
  Using Lemma \ref{lem:l1approx}, the following bounds the left term of \eqref{eqn:three-terms}
  \begin{align*}
    \left\|\sum_{i=1}^k w_i p_i - w_i \tp_i\right\|_1
    \le \sum_{i=1}^k w_i\left\|  p_i - \tp_i\right\|_1
    \le \sum_{i=1}^k w_i \frac{dL_i}{\sqrt{12}b}
    = \frac{d}{\sqrt{12}b} \sum_{i=1}^k w_i L_i.
  \end{align*}
  The following bounds the center term of \eqref{eqn:three-terms}, finishing the proof,
  \begin{align}
    \left\|\sum_{i=1}^kw_i \tp_i -  \tw_i\tp_i\right\|_1 
    &\le \sum_{i=1}^k\left|w_i-\tw_i\right| \left\| \tp_i  \right\|_1 
    = \sum_{i=1}^k\left|w_i-w_i/\left(\sum_{j=1}^k w_j \right)\right|\label{eqn:nl-approx-ref}\\
    &= \left(\sum_{i=1}^kw_i \right)\left|1-\left(\sum_{j=1}^k w_j \right)^{-1}\right|  
    = \left|\left(\sum_{i=1}^kw_i \right)-1\right|  
    = \sum_{i=k+1}^\infty  w_i.\notag
  \end{align}
\end{proof}

\begin{proof}\textbf{of Theorem \ref{thm:nl-finite}}
  From Proposition \ref{prop:finiteant}, for all $d,b,k,n \in \nn$ and $0<\delta\le 1$. There exists an estimator $V_n \in \sH_{d,b}^k$ such that 
        \begin{equation*}
          \sup_{\tp \in \sD_d}P\left(\|\tp-V_n\|_1 > 3\min_{q\in\sH_{d,b}^k}\|\tp-q\|_1+ 7\sqrt{\frac{2bdk\log(4bdkn)}{n}}+7\sqrt{\frac{\log(\frac{3}{\delta})}{2n}}\right)<\delta,
        \end{equation*}
    where $V_n$ is a function of $X_1,\ldots,X_n\simiid \tp$. For this $V_n$ it thus follows that
        \begin{equation}\label{eqn:nl-finite}
          P\left(\|p-V_n\|_1 > 3\min_{q\in\sH_{d,b}^k}\|p-q\|_1+ 7\sqrt{\frac{2bdk\log(4bdkn)}{n}}+7\sqrt{\frac{\log(\frac{3}{\delta})}{2n}}\right)<\delta. 
        \end{equation}

  From Lemma \ref{lem:nl-approx},
  $\min_{\sH_{d,b}^k} \left\|p - h \right\|_1 \le\frac{d}{\sqrt{12}b}\sum_{i=1}^k w_i L_i  + 2\sum_{i=k+1}^\infty w_i$. So
  \begin{equation*}
    3\min_{q\in\sH_{d,b}^k}\|p-q\|_1 
    \le 3\left(\frac{d}{\sqrt{12}b}\sum_{i=1}^k w_i L_i  + 2\sum_{i=k+1}^\infty w_i\right)
    \le \frac{\sqrt{3}d}{2b}\sum_{i=1}^k w_i L_i  + 6\sum_{i=k+1}^\infty w_i.
  \end{equation*}
  The theorem follows from substituting this back into \eqref{eqn:nl-finite}.
\end{proof}

\begin{lemma} \label{lem:nlc-approx}
  Let $p \in \sD_d\cap \nlc(W,\tL)$, then
    $\min_{\sH_{d,b}^k} \left\|p - h \right\|_1 \le\frac{d}{\sqrt{12}b}L_k  + 2W_k$. 
\end{lemma}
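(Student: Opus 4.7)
The plan is to apply Lemma \ref{lem:nl-approx} to an NL-spectrum of $p$ that witnesses membership in $\nlc(W,\tL)$, and then bound each of the two resulting terms using the class parameters $W$ and $\tL$. Since $p \in \nlc(W,\tL)$, by definition there exists an NL-spectrum $(w,L)$ of $p$ with $L_i \le \tL_i$ for every $i$ and $\sum_{i=k+1}^\infty w_i \le W_k$ for every $k$.

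Applying Lemma \ref{lem:nl-approx} to this particular spectrum gives
\[
  \min_{h\in \sH_{d,b}^k} \|p - h\|_1 \;\le\; \frac{d}{\sqrt{12}\,b}\sum_{i=1}^k w_i L_i \;+\; 2\sum_{i=k+1}^\infty w_i .
\]
The tail term is at most $2W_k$ directly from the NL-class condition. For the bias term, I would exploit monotonicity of $\tL$: because $\tL$ is non-decreasing and $L_i \le \tL_i$, for every $i \le k$ one has $L_i \le \tL_i \le \tL_k$. Since $(w_i)_{i=1}^\infty$ is a non-negative sequence summing to $1$, this yields
\[
  \sum_{i=1}^k w_i L_i \;\le\; \tL_k \sum_{i=1}^k w_i \;\le\; \tL_k .
\]
Substituting the two bounds back gives $\frac{d}{\sqrt{12}\,b}\tL_k + 2W_k$, which matches the stated conclusion once one identifies the symbol $L_k$ in the statement of the lemma with the class parameter $\tL_k$ (the same mild abuse of notation already used in Theorem \ref{thm:nlc-finite}, where the class is written $\nlc(W,L)$ and the bound uses $L_k$).

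There is no real obstacle here: the only non-routine step is turning the weighted sum $\sum_{i=1}^k w_i L_i$ into a single factor $\tL_k$, and monotonicity of $\tL$ combined with $\sum_i w_i = 1$ does this in one line. Once Lemma \ref{lem:nl-approx} is in hand, the argument is essentially a two-line computation, and the lemma will then plug directly into Proposition \ref{prop:finiteant} (via the same reduction used in the proof of Theorem \ref{thm:nl-finite}) to produce Theorem \ref{thm:nlc-finite}.
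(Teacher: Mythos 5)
Your proof is correct and follows essentially the same route as the paper: fix an NL-spectrum witnessing membership in the class, invoke the bias bound of Lemma \ref{lem:nl-approx}, then use monotonicity of $\tL$ together with $\sum_i w_i = 1$ to collapse the weighted sum and the class tail condition to bound the remainder. The only cosmetic difference is that you apply the stated conclusion of Lemma \ref{lem:nl-approx} directly, whereas the paper re-traces its intermediate display \eqref{eqn:nl-approx-ref} before bounding the terms; the content is identical.
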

\begin{proof}\textbf{of Lemma \ref{lem:nlc-approx}}
  Because $p\in \nlc(W,\tL)$, $p$ can be written as, $p = \sum_{i=1}^\infty w_i \prod_{j=1}^d p_{i,j}$,
  with, $w_i \ge 0$, $\sum_{i=k+1}^\infty w_i \le W_k$ for all $k$, and $p_{i,j} \in \sD_d \cap \lip_{L_i}$, with $L_i \le \tL_k$ for all $i\le k$. Using this NL-spectrum the proof proceeds exactly as the proof of Lemma \ref{lem:nl-approx} to \eqref{eqn:nl-approx-ref}, yielding
  \begin{equation*}
    \min_{h \in \sH_{d,b}^k}\left\|p - h\right\|_1\le \sum_{i=1}^k w_i \frac{dL_i}{\sqrt{12}b}
    +\sum_{i=1}^k\left|w_i-w_i/\left(\sum_{j=1}^k w_j \right)\right|  
    +  \sum_{i=k+1}^\infty  w_i.
  \end{equation*}
  Since $\tL$ is non-decreasing and $w_i \ge 0$ with $\sum_{i=1}^k w_i \le 1$ it follows that 
\begin{align*}
  \sum_{i=1}^k w_i \frac{dL_i}{\sqrt{12}b}
  &\le \sum_{i=1}^k w_i \frac{d\tL_i}{\sqrt{12}b}
  \le \sum_{i=1}^k w_i \frac{d\tL_k}{\sqrt{12}b}
  \le \frac{d\tL_k}{\sqrt{12}b}.
\end{align*}
  Using the same argument as in the proof of Lemma \ref{lem:nl-approx} it follows that
  \begin{align*}
    \sum_{i=1}^k\left|w_i-w_i/\left(\sum_{j=1}^k w_j \right)\right|  + \sum_{i=k+1}^\infty  w_i
    & = 2\sum_{i=k+1}^\infty  w_i \le 2W_k.
  \end{align*}
\end{proof}

\begin{proof}\textbf{of Theorem \ref{thm:nlc-finite}}
  Proceed as in the proof of Theorem \ref{thm:nl-finite}, with Lemma \ref{lem:nlc-approx} for Lemma \ref{lem:nl-approx}.
\end{proof}

\begin{proof}\textbf{of Proposition \ref{prop:mv-poly}}
  Let $V_n$ be the estimator from Theorem \ref{thm:nlc-finite} with $k = \left \lceil n^{1/(3 \alpha + \beta +1)} \right \rceil$ and $b = \left\lceil n^{(\alpha + \beta)/(3\alpha + \beta + 1)}\right\rceil$. Note that \eqref{eqn:finiteant} also holds for $V_n$. Let $p\in\sD_d\cap \nlc(W,L)$ with $W_k \in O\left(k^{-\alpha}\right)$ and $L_k \in O\left(k^{\beta}\right)$. From Theorem \ref{thm:nlc-finite} there is the following bound
   \begin{equation*}
          P\left(\|p-V_n\|_1 > \frac{\sqrt{3}d}{2b} L_k  + 6W_k+ 7\sqrt{\frac{2bdk\log(4bdkn)}{n}}+7\sqrt{\frac{\log(\frac{3}{\delta})}{2n}}\right)<\delta. 
        \end{equation*}
  It will be shown that the four terms right of the inequality go to zero at rate $\tilde{O}\left(n^{-\alpha/\left( 3 \alpha + \beta +1 \right)} \right)$. First observe that 
    $k\in \Theta_n\left(   n^{1/(3 \alpha + \beta +1)}\right)$
    and
    $b\in \Theta_n\left( n^{(\alpha + \beta)/(3\alpha + \beta + 1)}\right)$.
  For the first two terms, for $n$ sufficiently large,
  \begin{align*}
    \frac{\sqrt{3}d}{2b}L_k + 6W_k
    &\le C \frac{k^{\beta}}{n^{(\alpha + \beta)/(3\alpha + \beta + 1)}} + C k^{-\alpha} & \text{($C$ chosen sufficiently large)} \\
    &\le C' \frac{n^{\beta/(3 \alpha + \beta +1)}}{n^{(\alpha + \beta)/(3\alpha + \beta + 1)}} + C' n^{-\alpha/(3\alpha +\beta +1)} & \text{(substitute in $n$; $C'$ suf. large)}  \\
    &=  C' n^{-\alpha/(3\alpha +\beta +1)} + C' n^{-\alpha/(3\alpha +\beta +1)}.
  \end{align*}
  For sufficiently large $C$ The third term can bounded as
    $7\sqrt{\frac{2bdk\log(4bdkn)}{n}}
    \le C\sqrt{\frac{bk}{n}} \sqrt{\log(4bdkn)}$.
  For the first term, for sufficiently large $n$ and for $C$ chosen sufficiently large
  \begin{align*}
    \sqrt{\frac{bk}{n}}
    &\le  C\sqrt{\frac{n^{(\alpha + \beta)/(3\alpha + \beta + 1)}n^{1/(3 \alpha + \beta +1)}}{n}}
    =  C\sqrt{\frac{n^{(\alpha + \beta)/(3\alpha + \beta + 1)}n^{1/(3 \alpha + \beta +1)}}{n^{(3\alpha + \beta + 1)/(3\alpha + \beta + 1)}}}\\
    &= \sqrt{n^{-2\alpha/(3\alpha + \beta +1)}}
    = n^{-\alpha/(3\alpha + \beta +1)}.
  \end{align*}
  Since $4bdkn \in O_n\left(\poly(n)\right)$, it follows that $\sqrt{\log(4bdkn)} \in O_n \left(\sqrt{\log(n)}\right)$. Combining the previous two rates yields
  \begin{equation*}
    7\sqrt{\frac{2bdk\log(4bdkn)}{n}} \in O_n\left( n^{-\alpha/(3\alpha + \beta +1)}\sqrt{\log(n)} \right) \subset \tilde{O}_n\left( n^{-\alpha/(3\alpha + \beta +1)} \right).
   \end{equation*}
   Finally $7\sqrt{\frac{\log(\frac{3}{\delta})}{2n}}\in O_n\left(1/\sqrt{n}\right)$. Since $\alpha,\beta >0$ it follows that $\alpha/(3\alpha + \beta +1) < 1/3$ and
   \begin{align*}
     7\sqrt{\frac{\log(\frac{3}{\delta})}{2n}}\in O_n\left(n^{-1/2}\right) \subset O_n\left( n^{-\alpha/(3\alpha + \beta +1)} \right),
   \end{align*}
   which finishes the rate portion of this proof.
   For universal consistency \eqref{eqn:finiteant} will be used. It has already been shown that the right two summands in \eqref{eqn:finiteant} go to zero. The following lemma is also from \cite{vandermeulen21} and demonstrates that the bias term goes to zero for all densities in $\sD_d$, finishing the proof.
\begin{lemma}[Lemma 2.1 from \cite{vandermeulen21}]\label{lem:lrbias}
  Let $p \in \sD_d$. If $k\to \infty$ and $b \to \infty$ then $\min_{q\in \sH_{d,b}^k} \left\|p - q\right\|_1 \to 0$.
\end{lemma}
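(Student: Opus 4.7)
The plan is to approximate $p$ in $L^1$ by a step function built from finitely many boxes, and then replace each box by a product of one-dimensional histograms. Fix $\varepsilon > 0$. Because $p \in L^1\left([0,1]^{\times d}\right)$, density of box-partition simple functions yields
\begin{equation*}
  s = \sum_{i=1}^N c_i \prod_{j=1}^d \ind_{[a_{i,j}, b_{i,j}]},\qquad c_i\ge 0,\quad [a_{i,j}, b_{i,j}] \subseteq [0,1],
\end{equation*}
with $\|p-s\|_1 < \varepsilon/4$. Since $\|s\|_1 \in [1-\varepsilon/4,\, 1+\varepsilon/4]$, normalizing produces a pdf $\tilde s$ supported on $[0,1]^{\times d}$ with $\|p - \tilde s\|_1 < \varepsilon/2$. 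Rewriting $\tilde s$ in rank-$N$ multiview form, set $p_{i,j} = \ind_{[a_{i,j}, b_{i,j}]}/(b_{i,j}-a_{i,j})\in \sD_1$ and let the weights $w_i \ge 0$ be the normalized coefficient of the $i$-th box times its volume, so that $\tilde s = \sum_{i=1}^N w_i \prod_{j=1}^d p_{i,j}$ with $\sum_{i=1}^N w_i = \int \tilde s\, d\lambda = 1$.

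I would then approximate each $p_{i,j}$ by its piecewise average onto the $b$-bin partition,
\begin{equation*}
  h_{i,j} \triangleq \sum_{\ell=1}^b \frac{\int_{B_\ell} p_{i,j}\, d\lambda}{\left|B_\ell\right|}\, \ind_{B_\ell}\in\sH_{1,b},
\end{equation*}
which is itself a pdf histogram and agrees with $p_{i,j}$ on every bin not touching $a_{i,j}$ or $b_{i,j}$, so $\|p_{i,j}-h_{i,j}\|_1 \to 0$ as $b \to \infty$. Applying the standard product bound $\left\|\prod_{j=1}^d p_{i,j} - \prod_{j=1}^d h_{i,j}\right\|_1 \le \sum_{j=1}^d \|p_{i,j}-h_{i,j}\|_1$ (Lemma 3.3.7 in \cite{reiss89}, already invoked at \eqref{eqn:reiss}) and then taking the $w_i$-weighted convex combination yields
\begin{equation*}
  \left\|\tilde s - \sum_{i=1}^N w_i \prod_{j=1}^d h_{i,j}\right\|_1 \le d\cdot \max_{i,j}\|p_{i,j}-h_{i,j}\|_1 < \varepsilon/4
\end{equation*}
for $b$ sufficiently large. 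The surrogate $\sum_i w_i \prod_j h_{i,j}$ lies in $\sH_{d,b}^N \subseteq \sH_{d,b}^k$ for any $k \ge N$, so for such $k$ and $b$ the triangle inequality gives $\min_{q\in \sH_{d,b}^k}\|p-q\|_1 < \varepsilon$. Letting $\varepsilon \to 0$ then shows $\min_{q\in \sH_{d,b}^k}\|p-q\|_1 \to 0$ as $k, b\to\infty$.

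The main obstacle I foresee is normalization bookkeeping: one must verify at each stage that the intermediate approximation is a bona fide pdf (so that the final surrogate lies in $\sH_{d,b}^k$) rather than merely a non-negative $L^1$ function, since simple-function density and piecewise averaging only preserve non-negativity in general. This is easily handled because the normalization constants tend to $1$ and the piecewise average of any pdf is automatically a pdf histogram. An alternative plan that exploits this paper's own machinery would be to first approximate $p\in \sD_d$ in $L^1$ by an a.e.\ continuous pdf $\tilde p\in \sD_d$ (via a mollifier followed by a slight rescaling to keep support inside $[0,1]^{\times d}$), invoke Theorem \ref{thm:decomp} to extract an NL-spectrum $(w,L)$ of $\tilde p$, and then apply Lemma \ref{lem:nl-approx}, first sending $k\to\infty$ to kill the tail $\sum_{i=k+1}^\infty w_i$ and then $b\to\infty$ to kill the leading bias $\tfrac{d}{\sqrt{12}b}\sum_{i=1}^k w_i L_i$.
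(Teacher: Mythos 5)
The paper does not prove this lemma; it is imported verbatim as Lemma 2.1 from \cite{vandermeulen21}, so there is no in-paper proof to compare against. Your blind proof is correct and self-contained. The chain of steps all hold: density of non-negative box step functions in $L^1\left([0,1]^{\times d}\right)$, normalization to a bona fide pdf (with $\left|\,\|s\|_1-1\right|\le\|p-s\|_1$ by reverse triangle inequality), rewriting as a rank-$N$ mixture with normalized-indicator marginals $p_{i,j}$, replacing each $p_{i,j}$ by its $b$-bin conditional average $h_{i,j}\in\sH_{1,b}$, and applying the Reiss tensorization bound followed by convexity. The key bookkeeping you flagged is handled correctly: the conditional average of a pdf is a pdf histogram, the $w_i$ sum to one so the surrogate lies in $\sH_{d,b}^N\subseteq\sH_{d,b}^k$ for $k\ge N$, and, crucially, $N$ is fixed before sending $b\to\infty$, so the finitely many boundary-bin errors $\|p_{i,j}-h_{i,j}\|_1\lesssim 1/\left(b\,(b_{i,j}-a_{i,j})\right)$ vanish uniformly over the finite index set. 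Your alternative route (compress support, mollify to get a continuous pdf in $\sD_d$, invoke Theorem \ref{thm:decomp} and Lemma \ref{lem:nl-approx}, sending $k\to\infty$ then $b\to\infty$) is also valid but is heavier machinery and circular-looking for a foundational approximation lemma, whereas the elementary box-step-function argument is direct and likely close to what the cited source does.
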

\end{proof}
\acks{This work was supported by the Federal Ministry of Education and Research (BMBF) for the Berlin Institute for the Foundations of Learning and Data (BIFOLD) (01IS18037A).}

\bibliography{blah.bib}

\appendix
\section{Notation} \label{appx:notation}
\begin{tabular}{|l|l|}
  \hline
  $A\times B$ & Cartesian Product
  \\\hline
  $f\times g$ & $(f\times g)(a,b) = f(a)g(b)$
  \\\hline
  $f\cdot g$ & $(f\cdot g)(a) = f(a)g(a)$
  \\\hline
  pdf& a non-negative function that integrates to one
  \\\hline
  $\nn$& $\left\{1,2,3,\ldots\right\}$
  \\\hline
  $\ind_A$ & indicator function on $A$
  \\\hline
  $\left[n\right]$& $\left\{1,2,\ldots,n\right\}$
  \\\hline
  $\sD_d$& set of all pdfs whose support is contained in $[0,1]^{\times d}$
  \\\hline
  $\sF$ & the set of all smooth, Lipschitz continuous pdfs on $\rn$.
  \\\hline
  $\sF^d$ &  $\left\{f_1 \times \cdots \times f_d \mid f_i \in \sF \right\}$
  \\\hline
  $\cone\left(S\right)$& $\left\{ \sum_{i=1}^n w_i s_i\mid n\in \nn, w_i\ge 0,s_i\in S\right\}$
  \\\hline
  $S^\nn$ & the set of all infinite sequences of $S$, i.e. $\left\{\left(s_i\right)_{i=1}^\infty \mid s_i \in S\right\}$
  \\\hline
  $\lambda$ & the Lebesgue measure, dimension is left implicit
  \\\hline
  $\lip_L$ & the set of real valued functions with domain $[0,1]$ that are $L$-Lipschitz continuous
  \\\hline
  $\sH_{1,b}$& $\left\{\sum_{i=1}^b w_i \ind_{[(i-1)/b,i/b)}\mid w_i\ge 0, \int \sum_{i=1}^b w_i \ind_{[(i-1)/b,i/b)}d\lambda = 1  \right\}$
  \\\hline
  $\sH_{d,b}^k$ &  $\left\{\sum_{i=1}^k w_i \prod_{j=1}^d h_{i,j}\mid w_i \in \Delta_k, h_{i,j} \in \sH_{1,b} \right\}$
  \\\hline
  \end{tabular}
\section{External Results}\label{appx:additional}
\begin{lemma}[Lemma 3.3.7 in \cite{reiss89}]
For probability measures $\mu_i,\nu_i$, $i=1,\ldots,d$,
  \begin{equation*}
    \left\|\prod_{i=1}^d \mu_i - \prod_{i=1}^d \nu_i\right\| \le \sum_{i=1}^d \left\|\mu_i - \nu_i\right\|,
\end{equation*}
  where the norm is the total variation norm. (Note that the total variation norm is equivalent to $L^1$ for pdfs.)
\end{lemma}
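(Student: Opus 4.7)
The plan is to prove the inequality by induction on $d$, using the standard telescoping/hybrid trick. The single auxiliary fact needed is that total variation is multiplicative over product measures: for a (possibly signed) finite measure $\mu$ and a (possibly signed) finite measure $\nu$, $\|\mu\times\nu\|_{TV} = \|\mu\|_{TV}\cdot \|\nu\|_{TV}$. In particular, when $\nu$ is a probability measure, $\|\mu\times\nu\|_{TV}=\|\mu\|_{TV}$. This follows by taking Hahn--Jordan decompositions $\mu = \mu^+-\mu^-$ and $\nu=\nu^+-\nu^-$; the product $\mu\times\nu$ then has Hahn--Jordan decomposition with positive part $\mu^+\times\nu^++\mu^-\times\nu^-$ and negative part $\mu^+\times\nu^-+\mu^-\times\nu^+$, whose total masses sum to $(\mu^+(\cdot)+\mu^-(\cdot))(\nu^+(\cdot)+\nu^-(\cdot))=\|\mu\|\|\nu\|$.

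The base case $d=1$ is vacuous. For the induction step, suppose the bound holds for $d$ factors. Insert a single hybrid term and split:
\begin{align*}
\prod_{i=1}^{d+1}\mu_i - \prod_{i=1}^{d+1}\nu_i
&= \left(\prod_{i=1}^{d}\mu_i\right)\times \mu_{d+1} - \left(\prod_{i=1}^{d}\mu_i\right)\times \nu_{d+1}\\
&\quad + \left(\prod_{i=1}^{d}\mu_i\right)\times \nu_{d+1} - \left(\prod_{i=1}^{d}\nu_i\right)\times \nu_{d+1}\\
&= \left(\prod_{i=1}^{d}\mu_i\right)\times(\mu_{d+1}-\nu_{d+1}) + \left(\prod_{i=1}^{d}\mu_i - \prod_{i=1}^{d}\nu_i\right)\times \nu_{d+1}.
\end{align*}
Apply the triangle inequality and then the multiplicativity of total variation. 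Since $\prod_{i=1}^d \mu_i$ and $\nu_{d+1}$ are probability measures, each contributes a factor of $1$, so the right-hand side is at most
\begin{equation*}
\left\|\mu_{d+1}-\nu_{d+1}\right\| + \left\|\prod_{i=1}^{d}\mu_i - \prod_{i=1}^{d}\nu_i\right\|.
\end{equation*}
The induction hypothesis bounds the second term by $\sum_{i=1}^d \|\mu_i-\nu_i\|$, completing the induction.

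The main obstacle is nothing combinatorial (the telescoping is standard) but the multiplicativity fact $\|\mu\times\nu\|_{TV}=\|\mu\|_{TV}\|\nu\|_{TV}$ for signed measures. Alternatively, in the pdf setting relevant to this paper, one can bypass signed-measure bookkeeping: writing $\mu_i,\nu_i$ via densities $p_i,q_i$, the total variation distance reduces to the $L^1$ norm and the identity $\|fg\|_1=\|f\|_1\|g\|_1$ for a product of functions in disjoint variables (which is just Fubini plus non-negativity when one factor is a density, and a short Hahn-style split in the general signed case) gives the same telescoped bound directly.
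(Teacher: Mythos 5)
Your proof is correct. Note that the paper itself does not prove this statement: it is quoted verbatim as an external result (Lemma 3.3.7 of Reiss, 1989) and used as a black box in establishing \eqref{eqn:reiss}, so there is no internal proof to compare against. Your argument --- telescoping through the hybrid measures $\left(\prod_{i\le d}\mu_i\right)\times\nu_{d+1}$ and invoking multiplicativity of total variation over products --- is the standard derivation and is sound. The Hahn--Jordan justification of $\|\mu\times\nu\|=\|\mu\|\,\|\nu\|$ is right: if $A$ and $B$ are the positive sets for $\mu$ and $\nu$, then $(A\times B)\cup(A^c\times B^c)$ is a positive set for $\mu\times\nu$, which yields exactly the decomposition you state. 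Two small remarks. First, for the induction step you only need the weaker one-sided fact $\|\sigma\times\nu\|\le\|\sigma\|$ when $\nu$ is a probability measure and $\sigma$ is signed, which follows immediately from $|(\sigma\times\nu)(E)|\le(|\sigma|\times\nu)(E)$; this avoids identifying the exact Jordan decomposition of the product. Second, in the density formulation relevant to \eqref{eqn:reiss}, the identity $\int|f(x)g(y)|\,dx\,dy=\int|f(x)|\,dx\int|g(y)|\,dy$ is immediate from Tonelli applied to $|f(x)|\,|g(y)|$, with no case split needed, so your closing alternative is even simpler than you suggest.
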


\end{document}